\definecolor{foocite}{rgb}{0,0.75,0}
\definecolor{foolink}{rgb}{0,0,1}
\definecolor{foourl}{rgb}{1,0,0}
\newcommand{\comment}[1]{}
\def\bs{\backslash}
\def\fit{\mathcal{F}}
\newenvironment{packed_enum}{
\begin{itemize}
  \setlength{\itemsep}{1pt}
  \setlength{\parskip}{0pt}
  \setlength{\parsep}{0pt}
}{\end{itemize}}
\newtheorem{theorem}{Theorem}
\def\xthm[#1][#2][#3]{\newtheorem{#2}[theorem]{#3} \newrefformat{#2}{#3 \ref{#11}}}
\newcommand{\answerCommand}{}%
  {\renewcommand{\answerCommand}{#1\\}%
   \noindent\textbf{\answerCommand}%
   }{\\}%
\renewcommand{\answerCommand}{#1}%
   \noindent\textbf{\answerCommand}%
\title{Diameter Bounds for Planar Graphs}
\author{Radoslav Fulek${}^*$
\and
Filip Mori\'{c}${}^*$
\and David Pritchard\thanks{Ecole Polytechnique F\'ed\'erale de Lausanne. Email:~$\{$\texttt{radoslav.fulek}, \texttt{filip.moric}, \texttt{david.pritchard}$\}$\texttt{@epfl.ch}}
}
\date{}
\begin{document}

\maketitle

\thispagestyle{empty}

\begin{abstract}
The \emph{inverse degree} of a graph is the sum of the reciprocals of the degrees of its vertices. We prove that in any connected planar graph, the diameter is at most $5/2$ times the inverse degree, and that this ratio is tight. To develop a crucial surgery method, we begin by proving the simpler related upper bounds $(4(|V|-1)-|E|)/3$ and $4|V|^2/3|E|$ on the diameter (for connected planar graphs), which are also tight.
\end{abstract}



\section{Introduction}

\comment{By a (simple) {\it graph} $G=(V,E)$ we understand a pair consisting of the finite
set of {\it vertices} $V$ and the finite set of {\it edges} $E$ such that $E\subseteq {V \choose 2}$.

By a {\it path} $P=uPv$ between two vertices $u$ and $v$ in $G$ we understand a sequence $u_0=uu_1\ldots u_p=v$ such
that $u_iu_{i+1}\in E$ and $u_i\not=u_j$, if $i\not=j$. Then $l(P)=n$ denotes  the {\it length} of $P$.
Throughout the paper we assume that our graph $G$ is connected, i.e. there exists a path between any pair of vertices.}

In this paper we examine the relation between ``inverse degree" and diameter in connected planar simple graphs. The diameter $D(G)$ of a graph $G = (V, E)$ is the maximum distance between any pair of vertices, $D := \max_{u, v \in V} dist(u, v),$ where as usual the distance between two vertices is the minimum number of edges on any $u$-$v$ path. The \emph{inverse degree} $r(G)$ is a less well-studied quantity, and is defined equal to the sum of the inverses of the degrees, $r := \sum_{v \in V} d^{-1}(v)$.

\comment{By a distance $d(u,v)$ between two vertices $u,v$ in $G$ we understand the length of a shortest path between $u$ and $v$.
We define the {\it diameter} of $G$ (denoted by $(D(G)$) to be a greatest distance between two vertices of $G$,
i.e. $D(G)=\max_{u,v\in V}d(u,v)$. 
If $v\in V$, we let $d(v)$ denote the number of edges $e$ in $E$ incident to $v$, i.e. the number
of edges $e$ such that $v\in e$, or shortly the degree of $v$.
We define the inverse degree $r(G)$ of a finite graph $G=(V,E)$ as $\sum_{v\in V}1/d(v)$, where $d(v)$ is
the degree of the vertex $v$.

By drawing of a graph $G=(V,E)$ in the plane we understand a representation of $G$ in which each  vertex in $V$
corresponds to a point in the Euclidean plane $\mathbb{R}^2$ and each edge is represented by a Jordan arc joining its endpoints.
By a planar graph $G=(V,E)$ we understand a graph which can be drawn in the plane without edge crossings.
By a plane graph $G$ we understand a drawing of the planar graph $G$ in the plane without edge crossings.
By a face of a plane graph $G$ we understand a connected region in the plane bounded by edges of $G$.
}

The history of inverse degree stems from the conjecture-generating program Graffiti~\cite{Fajtlowicz}. Let $n$ denote $|V|$ and $m$ denote $|E|$. Graffiti posited that the \emph{mean distance} $\tbinom{n}{2}^{-1} \sum_{\{u,v\} \subset V} dist(u, v)$ is always at most the inverse degree $r(G)$. This was disproved by Erd\H{o}s, Pach \& Spencer~\cite{Erdos}, who also proved the tight bound $D = O\bigl(\frac{\log n}{\log \log n} \cdot r\bigr)$ in the process. Subsequently, Mukwembi~\cite{Mukwembi} studied the diameter for various kinds of graphs in terms of inverse degrees. Among other things he conjectured that for any \emph{planar} graph $G$, $D(G)\leq \frac{9}{4}r(G)$.

We disprove Mukwembi's conjecture and establish just how large $D/r$ can be:
\begin{theorem}
\label{thm:MainThm}
For any planar graph $G$, $D(G) < \frac{5}{2}r(G)$. There is an infinite family of graphs with $D(G) = \frac{5}{2}r(G) - O(1)$.
\end{theorem}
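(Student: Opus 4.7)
The plan is to extend the surgery technique developed for the two weaker bounds $(4(|V|-1)-|E|)/3$ and $4|V|^2/(3|E|)$. Fix a planar embedding of $G$ and a shortest path $P = v_0 v_1 \cdots v_D$ achieving the diameter. I would aim to prove the equivalent statement $r(G) > 2D/5$ by induction on $|V|+|E|$, applying reductions that either shrink $G$ without decreasing $D$ and without increasing $r$, or else pay precisely for any loss on both sides.

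The surgery toolkit I envision has three kinds of moves. First, if a subgraph $H$ is attached to the rest of $G$ at a single cut-vertex $u$ and $H$ contains neither endpoint of $P$, then $H$'s interior can be deleted: this removes nonnegative terms $1/d(v)$ from $r$, weakly lowers $d(u)$ so $1/d(u)$ does not decrease, and leaves $D$ unchanged. Second, short ``ears'' and pendant paths attached to $P$ are contracted with careful bookkeeping of the resulting change in $r$. Third, the planar embedding is manipulated so that the outer face is eventually bordered by $P$, leaving a ``tubular'' $2$-connected graph around $P$. These are precisely the kinds of moves used for the two earlier bounds.

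After surgery, the graph is tubular around $P$, at which point two structural facts take over. Because $P$ is a shortest path, every off-path neighbor of $v_i$ lies in the BFS-layer $i-1$, $i$, or $i+1$ from $v_0$, and cannot be adjacent to any $v_j$ with $|i-j|\geq 3$; this bounds how many path-vertices can ``share'' an off-path vertex. Combined with the planar inequality $\sum_v d(v) < 6|V|$, one can group the vertices of the tubular graph into blocks (each block consisting of a few consecutive $v_i$'s together with their off-path neighbors) and show that every block contributes at least $2/5$ per path-vertex to $r$, yielding $r(G) > 2D/5$.

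For the matching lower bound, I would take $G_n$ to be a long triangulated planar strip in which essentially every interior vertex has degree $5$, for instance a strip cut from a $5$-regular planar configuration with only $O(1)$ boundary vertices of other degree; then $D(G_n) \sim n$ and $r(G_n) \sim 2n/5$, so $D(G_n)/r(G_n) \to 5/2$. The main obstacle is the upper-bound surgery step: because $5/2$ is tight, there is no slack in the accounting, so every surgery move must either exactly preserve the invariant $r > 2D/5$ or move the graph strictly closer to the extremal strip. This is presumably why Mukwembi's conjectured weaker constant $9/4$ admitted simpler counting arguments yet was not tight, whereas the sharp constant $5/2$ requires the more delicate surgery method whose foundations are laid in the proofs of the two simpler bounds.
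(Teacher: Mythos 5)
Your high-level architecture --- take a minimal counterexample, use articulation-vertex cleanup, reduce to a ``tubular'' graph around a diameter by surgery, then do local accounting --- is the right shape, and your extremal family (a triangulated strip whose interior vertices have degree $5$) is exactly the paper's $L_n$. However, the proposal leaves the actual technical heart of the proof unaddressed, in two places.

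First, the surgery step is not the one you describe. The paper does not contract ears or manipulate the embedding so that the diameter path lies on the outer face. Instead it organizes the graph by BFS-levels $V_i$ from one endpoint of a diameter, picks two ``thin'' levels $V_L,V_R$ (of size $\le 2$) with only size-$\ge 3$ levels between them, deletes everything strictly between and reconnects $V_L$ to $V_R$ by a biclique. The quantitative control uses the arithmetic--harmonic mean bound $\sum_{v\in S}1/d(v)\ge |S|^2/\sum_{v\in S}d(v)$ together with Euler's bound $m\le 3n-6$ on the deleted region to lower bound the drop in $r$ against the drop in $D$. Even so, the raw estimate is not tight enough: one needs a case analysis over seven ``types'' of boundary levels and a set of \emph{bonus operations} (shortcutting degree-$2$ vertices created by surgery, deleting the new degree-$1$ source, or even \emph{adding} a vertex in the $\omega$-$\mu$ case) to squeeze out the extra fitness. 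Because one of those bonus operations adds a vertex, the induction order matters: the paper uses ``minimal $n$, then among those, maximal fitness,'' which your plain induction on $|V|+|E|$ would not accommodate without modification. Also note that your cut-vertex argument is stated too loosely: deleting a hanging block $H$ at an articulation vertex $u$ both \emph{removes} the positive terms $\sum_{v\in H}1/d(v)$ and \emph{increases} $1/d(u)$, so you must verify the removed terms dominate; this does hold (each $v\in H$ has degree at most $|H|$, so the removed sum is $\ge 1$, while $1/d(u)$ can increase by at most $1$), but it is not automatic.

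Second, the claim that ``every block contributes at least $2/5$ per path-vertex to $r$'' is false as a local statement. In the reduced graph a size-$3$ level flanked by two size-$3$ levels has roughly $24$ edge-endpoints, average degree $8$, and would contribute only about $3/8 < 2/5$; this is precisely why the structure theorem forbids two adjacent size-$3$ levels. Even in allowed configurations the per-level contribution depends on the sizes of the neighboring levels, so the bound $r\ge \tfrac{2}{5}D$ does not follow by a uniform per-level count. The paper resolves this by encoding the level-size sequence as a walk in a small weighted digraph whose arcs $(i,j)\to(j,k)$ carry cost $\mathcal{C}(i,j,k)-\tfrac{2}{5}$ (the lower bound on that level's contribution minus the diameter credit), and then showing by a shortest-path computation that every $s$--$t$ walk in this digraph has nonnegative (in fact $\ge \tfrac{37}{60}$) cost. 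You should replace your ``each block gives $2/5$'' step with such a dynamic-programming argument over triples of consecutive level sizes, or you will not be able to close the bound in the adjacent cases.
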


\comment{Moreover, we would like to conjecture that the multiplicative constant in the previous theorem should be as follows.
\begin{conjecture}
\label{conj:MainConj}
For any planar graph $G$, $D(G)\leq \frac{5}{2}r(G)$.
\end{conjecture}}

The tight family we construct is very simple, but the bound $D(G) \leq \frac{5}{2}r(G)$ turns out to be quite challenging. A natural approach is to use the arithmetic-harmonic mean inequality to bound $r$ with the simpler quantity $r \ge \frac{n^2}{2m}$; to this end we prove the tight bound $D \le \frac{4n^2}{3m}$ using a simple ``surgery argument."

However, the tight examples of graphs with $D = \frac{4n^2}{3m} - O(1)$ are non-regular (about $2/3$ of vertices have degree 5, and $1/3$ have degree 2) and so they are not tight for the ratio $D/r$ (since our use of the arithmetic-harmonic mean is tight only for regular graphs). Indeed, the bounds $D \le \frac{4n^2}{3m}$ and $r \ge \frac{n^2}{2m}$ do not imply \prettyref{thm:MainThm}, but rather the weaker bound $D \le \frac{8}{3}r$. To actually prove \prettyref{thm:MainThm} (in \prettyref{sec:funtimes}) we carefully engineer a more intricate version of the surgery argument.

\section{Initial Bounds from Surgery}\label{sec:initial}
In this section we focus on proving the less complex bound $D \le \frac{4n^2}{3m}$, and on proving that the ratio $\frac{4}{3}$ is best possible, for connected planar graphs. We use the following sneaky attack on the problem:
\begin{theorem}
\label{thm:AuxThm}
For every connected planar graph, $D\leq \frac{4(n-1)-m}{3}$.
\end{theorem}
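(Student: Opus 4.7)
The plan is to prove the equivalent inequality $\phi(G) := 4(n-1) - m - 3D(G) \ge 0$ by strong induction on $n$, slightly strengthening the claim to ``$\phi(G) \ge 0$, with equality if and only if $G$ is a path.'' The strengthening pays off in the critical triangle case below, where it upgrades the inductive bound on a subgraph $G'$ from $\phi(G') \ge 0$ to $\phi(G') \ge 1$.

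The easy reductions are three local surgeries, each producing a smaller connected planar $G'$ with $\phi(G) \ge \phi(G')$, to which the IH applies. \emph{(i) Leaf removal:} if $v$ has degree $1$, then $\phi(G) - \phi(G-v) = 3 + 3(D(G-v) - D(G)) \ge 0$ since $D(G-v) \in \{D(G), D(G) - 1\}$. \emph{(ii) Block decomposition:} if $G$ has a cut vertex, applying the IH to each block $G_i$ and using $D(G) \le D(G_i) + D(G_j)$ for an appropriate pair of blocks gives $\phi(G) \ge \sum_i \phi(G_i) \ge 0$. \emph{(iii) Smoothing:} if $v$ has degree $2$ with non-adjacent neighbors $u, w$, then $G' = (G - v) + \{uw\}$ is a smaller planar graph with $D(G') \ge D(G) - 1$, and $\phi(G) - \phi(G') = 3 + 3(D(G') - D(G)) \ge 0$.

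The critical remaining case is a $2$-connected $G$ containing a degree-$2$ vertex $v$ whose neighbors $u,w$ are themselves adjacent, so $\{u,v,w\}$ is a triangle. Deleting $v$ gives $\phi(G) - \phi(G-v) \in \{2, -1\}$ depending on whether $D(G-v)$ equals $D(G)$ or $D(G) - 1$. The dangerous subcase is the latter; here I would observe that $G - v$ remains $2$-connected (removing $v$ preserves $2$-connectivity precisely because $uw$ is already an edge), so for $n \ge 4$ it is a $2$-connected graph on at least three vertices, hence not a path, and the strengthened IH forces $\phi(G-v) \ge 1$, so $\phi(G) \ge 0$.

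The hardest remaining subcase is $G$ being $2$-connected with $\delta(G) \ge 3$, where no degree-$2$ vertex is available. I expect this to require the more direct surgical argument alluded to in the section title: contract a shortest path $P = v_0 \cdots v_D$ to a single vertex $p^*$, obtaining a planar multigraph on $n - D$ vertices with $m - D$ edges, and bound the multi-edges at $p^*$ using the fact that each off-path vertex has at most three neighbors on $P$ (otherwise $P$ would admit a shortcut). Combining Euler's bound on the underlying simple graph with this multi-edge count should yield $m - D \le 4(n - D - 1)$, equivalent to the target inequality. The main technical challenge will be controlling the contribution from the three-element windows $\{v_i, v_{i+1}, v_{i+2}\}$ of $P$, where I expect a careful charging scheme will be needed to get the constant $4$ exactly right.
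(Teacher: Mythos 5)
Your reductions (i)--(iii) and the triangle trick via the strengthened invariant are a nice alternative to the paper's 1-sum argument, and they validly shrink the problem to the $2$-connected case with $\delta(G)\ge 3$. (Two small cautions: in (ii), the clean estimate is $D(G)\le\sum_i D(B_i)$ over the block decomposition with $\sum_i (n(B_i)-1)=n-1$ and $\sum_i m(B_i)=m$, not ``an appropriate pair of blocks''; and a full strong induction with the ``equality iff path'' strengthening requires you to re-verify that strengthening in each reduction, not only invoke it in the triangle case.) But these are cosmetic compared to the real issue.

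The genuine gap is the final case, which is where all the content of the theorem lives (e.g.\ the near-tight family $L_n$ is $2$-connected with $\delta\ge 3$). Your contraction plan does not close: contracting a shortest path $P$ gives a planar multigraph $G/P$ on $n-D$ vertices with $m-D$ edges, and the ``at most three path-neighbours per off-path vertex'' observation bounds the parallel edges at $p^*$ by $2$ per off-path vertex beyond the simple graph. Combining with Euler on the underlying simple graph yields only $m-D\le 3(n-D)-6+2(n-D-1)=5(n-D)-8$, which exceeds the target $4(n-D)-4$ by $(n-D)-4$, a quantity that grows without bound. There is no obvious local charging on windows $\{v_i,v_{i+1},v_{i+2}\}$ that recovers the missing slack, because the off-path planar graph and the $p^*$-multi-edges both contribute at the ``Euler rate'' of $3$ per vertex. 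The paper avoids this by working with the BFS-level decomposition from one diameter endpoint rather than by contracting a path: it locates a maximal contiguous band of levels each of size $\ge 3$, deletes the interior of the band (say $\Delta n$ vertices spread over $\Delta D$ levels, so $\Delta n\ge 3\Delta D$), and re-glues the two boundary levels (each of size $\le 2$, connected) with a clique. Crucially, Euler's bound is applied \emph{only} to the deleted induced subgraph $G_{[L,R]}$ (which has $\Delta n + 4$ vertices), giving $\Delta m\le 3(\Delta n+4)-6+6=3\Delta n+12$ minus the $6$ clique edges actually re-added, i.e.\ $\Delta m\le 3\Delta n$. Then $4\Delta n-\Delta m-3\Delta D\ge \Delta n - 3\Delta D\ge 0$, so the surgery cannot decrease $\phi$, producing a smaller counterexample. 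You would need either to import this level-based surgery wholesale, or to find a genuinely different quantitative argument; the path contraction as stated is not enough.
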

We give the proof later in this section, introducing our surgery approach along the way. It gives the desired corollary:
\begin{corollary}\label{corollary:foo}
For every connected planar graph, $D\leq \frac{4n^2}{3m}$.
\end{corollary}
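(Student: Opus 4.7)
The plan is to deduce Corollary~\ref{corollary:foo} from \prettyref{thm:AuxThm} via a single elementary inequality between $m$ and $n$, with no further graph-theoretic input. By \prettyref{thm:AuxThm}, any connected planar graph satisfies
$$D \;\le\; \frac{4(n-1)-m}{3} \;\le\; \frac{4n-m}{3},$$
so it suffices to establish the purely numerical bound $\frac{4n-m}{3} \le \frac{4n^2}{3m}$ whenever $n \ge 1$ and $m \ge 1$.

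To verify this, I would clear the positive denominator $3m$ and rearrange: the inequality is equivalent to $m(4n-m) \le 4n^2$, i.e.\ $0 \le 4n^2 - 4mn + m^2 = (2n-m)^2$, which holds trivially. For a connected graph with $n \ge 2$ we have $m \ge n-1 \ge 1$, so the denominator is legitimately positive; the degenerate case $n=1$ gives $D=0$ and is vacuous. Equality in the algebraic step occurs precisely when $m = 2n$, which is consistent with the remark in the introduction that the extremal examples for this corollary have average degree roughly $4$ (about two-thirds of the vertices of degree $5$ and one-third of degree $2$).

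Since all the substantive work is packaged into \prettyref{thm:AuxThm}, no step in the corollary is expected to present an obstacle; the hardest part is simply noticing that one should loosen $\frac{4(n-1)-m}{3}$ to $\frac{4n-m}{3}$ before cross-multiplying, so that the final reduction lands on the manifestly non-negative quantity $(2n-m)^2$. (Going directly from $\frac{4(n-1)-m}{3}$ also works and yields $(2n-m)^2 + 4m \ge 0$, but is slightly less clean.) Morally, this is exactly the arithmetic--harmonic mean inequality $r \ge n^2/(2m)$ applied to the degree sequence, which is also why combining this corollary with $r \ge n^2/(2m)$ only recovers the weaker bound $D \le \tfrac{8}{3}r$, as the authors note.
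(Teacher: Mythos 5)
Your proof is correct and takes essentially the same approach as the paper: both deduce the corollary from \prettyref{thm:AuxThm} by invoking a ``square is nonnegative'' inequality. The only cosmetic difference is that you first loosen $4(n-1)-m$ to $4n-m$ and use $(2n-m)^2 \ge 0$, whereas the paper applies $(2(n-1)-m)^2 \ge 0$ directly, landing on the marginally sharper intermediate bound $D \le \frac{4(n-1)^2}{3m}$.
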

\begin{proof}
We know $(2(n-1)-m)^2 \ge 0$; rearranging yields $4(n-1)-m \le 4\frac{(n-1)^2}{m}$, thus \prettyref{thm:AuxThm} yields
$D(G) \le \frac{4(n-1)-m}{3} \le \frac{4(n-1)^2}{3m}$, which implies the corollary.
\end{proof}

We give some examples before proving \prettyref{thm:AuxThm}. One example disproves Mukwembi's conjecture, and the others demonstrate the tightness of the above theorems. For any even integer $n \ge 4$, let $L_n$ denote the graph with vertices $v^i_j$ for $i \in \{1, 2\}, 1 \le j \le n/2$, such that distinct nodes $v^i_j, v^{i'}_{j'}$ are joined by an edge whenever $|j-j'| \le 1$; the left side of Figure~\ref{figure:examples} illustrates $L_8$. Its diameter is $D(L_n) = n/2-1$, and its inverse degree is $r(L_n) = \frac{n-4}{5}+\frac{4}{3}$. Hence $D = \frac{5}{2}r - O(1)$ for this family of graphs and the second half of \prettyref{thm:MainThm} is proven.

\begin{figure}
\begin{center}
\begin{pspicture}(-0.5,-0.5)(3.5,1.5)
\psset{arrows=*-*,unit=2cm}
\psline(0,0)(0,1)
\psline(0,0)(1,0)
\psline(0,0)(1,1)
\psline(0,1)(1,1)
\psline[linestyle=dashed](0,1)(1,0)
\psline(1,0)(1,1)
\psline(1,0)(2,0)
\psline(1,0)(2,1)
\psline(1,1)(2,1)
\psline[linestyle=dashed](1,1)(2,0)
\psline(2,0)(2,1)
\psline(2,0)(3,0)
\psline(2,0)(3,1)
\psline(2,1)(3,1)
\psline[linestyle=dashed](2,1)(3,0)
\psline(3,0)(3,1)
\end{pspicture}
\hfill
\begin{pspicture}(-1,-0.5)(6.5,2.5)
\psset{arrows=*-*}
\psline(0,0)(2,0)
\psline(-0.5,1)(0,0)
\psline(-0.5,1)(0,2)
\psline(2,0)(-0.5,1)
\psline(-0.5,1)(1.5,1)
\psline(1.5,1)(0,2)
\psline(0,2)(2,2)
\psline[linestyle=dashed](2,2)(0,0)
\psline[linestyle=dashed](0,0)(0,2)
\psline(2,0)(4,0)
\psline(1.5,1)(2,0)
\psline(1.5,1)(2,2)
\psline(4,0)(1.5,1)
\psline(1.5,1)(3.5,1)
\psline(3.5,1)(2,2)
\psline(2,2)(4,2)
\psline[linestyle=dashed](4,2)(2,0)
\psline[linestyle=dashed](2,0)(2,2)
\psline(4,0)(6,0)
\psline(3.5,1)(4,0)
\psline(3.5,1)(4,2)
\psline(6,0)(3.5,1)
\psline(3.5,1)(5.5,1)
\psline(5.5,1)(4,2)
\psline(4,2)(6,2)
\psline[linestyle=dashed](6,2)(4,0)
\psline[linestyle=dashed](4,0)(4,2)
\psline(6,0)(6,2)
\psline(6,2)(5.5,1)
\psline(5.5,1)(6,0)
\end{pspicture}
\end{center}\caption{These planar graphs are depicted as if they were drawn on a cylindrical tube, with the dashed edges hidden on the back. Left: the graph $L_8$. Right: the graph $T_{12}$. }\label{figure:examples}
\end{figure}
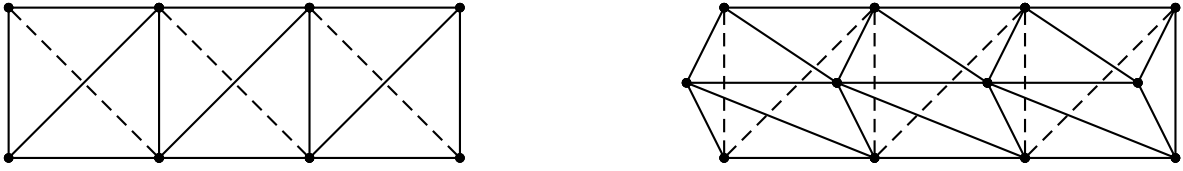

Here is the tight example for \prettyref{corollary:foo}: for any $n$ divisible by 3, take $L_{2n/3}$ and attach a path with $n/3$ additional nodes to $v^1_1$. The resulting graph has diameter $\frac{2n}{3}-1$ and $m = 5\frac{n}{3}-4+\frac{n}{3}$ edges, so $\frac{4n^2}{3m D}$ tends to 1 as $n$ tends to infinity.

Finally, \prettyref{thm:AuxThm} is best possible, up to an additive constant, for all possible values of $m$ and $n$. \emph{Euler's bound} says that in planar graphs having $n \ge 3$, we have $m \le 3n-6$; this maximum is achieved only for triangulations. For $n \ge 6$ divisible by 3, let $T_n$ be obtained from gluing a sequence of $\frac{n}{3}-1$ octahedra at opposite faces; we illustrate $T_{12}$ in the right side of Figure~\ref{figure:examples}. To demonstrate tightness of \prettyref{thm:AuxThm} we start with the extremal values of $m$. For $m = n-1$ we have exact tightness: the path graph $P_n$ has $D(P_n) = n-1 = \frac{4(n-1)-m(P_n)}{3}$. For $m = 3n-6$ when 3 divides $n$, the graph $T_n$ has $D = \frac{n}{3}-1$ and $3n-6$ edges, which is tight for \prettyref{thm:AuxThm} up to an additive constant; other $n$ are similar. More generally, for any $n$ and any $n-1 \le m \le 3n-6$, taking $T_{3 \lceil(m+2-n)/6 \rceil}$ and adding a path of $n- 3 \lceil(m+2-n)/6 \rceil$ more vertices to one end gives an $n$-node, $m$-edge graph with $D = \frac{4(n-1)-m}{3} - O(1)$.

\comment{In our proofs of We use the well-known bound on the number of edges in a planar graph:
\begin{theorem}
\label{thm:Euler}
The number of edges in a planar graph $G$ on $n>2$ vertices is at most $3n-6$.
In case that $G$ is disconnected and each of its  components has at least $3$ vertices, the number of edges of $G$ is at most $3n-12$.
\end{theorem}}

Now we give the proof of \prettyref{thm:AuxThm}, which has some ingredients used later on: a \emph{surgery} operation and decomposition into levels. In the proof, we will let $st$ be a diameter of $G$, e.g.~$dist_G(s, t)=D(G)$.
We let $V_i$, the \emph{$i$th level}, denote all vertices at distance $i$ from $s$, hence $\biguplus_{i=0}^D V_i$ is a partition of $V$. We use the shorthand $V_{[i, j]}$ to mean $\uplus_{i \le x \le j} V_x$ and $V_{\ge i}$ is analogous. Additionally, $G[X]$ denotes an induced subgraph and we will extend the subscript notation on $V$ to mean induced subgraphs of $G$, for example $G_{\ge i}  = G[V_{\ge i}]$.

\begin{proof}[Proof of \prettyref{thm:AuxThm}]
Assume for the sake of contradiction that $G$ is a graph with $D(G) > \frac{4(n-1)-m}{3}$, assume that $n$ is minimal over all such graphs; we may clearly also assume $E$ is \emph{maximal} in the sense that for any $e \not\in E$, either $G \cup \{e\}$ is non-planar or $D(G \cup \{e\}) < D(G)$.

Our first step is to show that $G$ is 2-vertex-connected. Otherwise, pick an articulation vertex $v$, then we can decompose $G$ into graphs $G_1, G_2$ with $V(G_1) \cap V(G_2) = \{v\}, V(G_1) \cup V(G_1) = V(G), E(G_1) \cup E(G_2) = E(G)$, and $n(G_1), n(G_2) <  n(G)$ (a \emph{1-sum}). By our choice of $G$, both $G_i$'s satisfy the conclusion of \prettyref{thm:AuxThm}. Moreover it is easy to see $m(G) = m(G_1)+m(G_2)$ and $D(G) \le D(G_1)+D(G_2)$. Hence
$$D(G)\leq D(G_1)+D(G_2)\leq \tfrac{4(n(G_1)-1)-m(G_1)}{3}+\tfrac{4(n(G_2)-1)-m(G_2)}{3} = \tfrac{4(n(G)-1)-m(G)}{3},$$
contradicting the fact that $G$ was chosen to be a counterexample. Thus $G$ is indeed 2-vertex-connected.

We now consider the diameter $st$ and the level decomposition mentioned previously.
Note that there are no edges between any pair of vertices in $V_i$ and $V_j$ if $|i-j|>1$. It is easy to see that if $|V_i| = 1$ for some $0<i<D$ then $V_i$ is an articulation point, so we have (by 2-vertex-connectivity) that $|V_i| \ge 2$ for all $0 < i < D$.

To begin, suppose $|V_i| \le 2$ for all $i \neq 0$. Since each vertex can only connect to neighbours in $V_{i-1}, V_i, V_{i+1}$ the maximum degree is 5 (and 2 for $s$, 3 for $t$, 4 in $V_1$). Thus (assuming $n \ge 4$ which is easy to justify) we have $D = \lfloor \frac{n}{2} \rfloor$ and $m \le \lfloor \frac{5n-7}{2} \rfloor$, whence it is easy to verify $D \le (4(n-1)-m)/3$ as needed.

Hence, there exists a level of size $\ge 3$. We need one well-known fact and a technical claim.
\begin{fact}\label{fact:2sums}
Let $G_1, G_2$ be planar graphs with $V(G_1) \cap V(G_2) = \{u, v\}$ and $uv \in E(G_1), E(G_2)$. Define their \emph{2-sum} $G$ by $V(G) = V(G_1) \cup V(G_2)$, $E(G) = E(G_1) \cup E(G_2)$. Then $G$ is planar.
\end{fact}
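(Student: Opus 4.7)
The plan is to use the standard ``flip into the outer face'' argument based on stereographic projection (or equivalently, the freedom to choose which face of a planar embedding is unbounded). First I would fix planar embeddings of $G_1$ and $G_2$ in the plane. The key observation is that since $uv \in E(G_i)$, the edge $uv$ lies on the boundary of some face of the embedding of $G_i$; by applying a stereographic projection (view the embedding on the sphere, then pick a face bordering $uv$ as the new outer face) we may assume in each embedding that $uv$ lies on the boundary of the unbounded face.

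Next I would glue the two embeddings along $uv$. In the embedding of $G_1$, let $R$ denote the unbounded region, which has $uv$ on its boundary. Take the embedding of $G_2$ in which $uv$ also borders the outer face, apply a homeomorphism of the plane that shrinks its finite part into a small disk contained in $R$ and places the image of the edge $uv$ of $G_2$ onto the edge $uv$ of $G_1$ (identifying endpoints with endpoints). Since $G_2 \setminus \{u,v\}$ is drawn inside $R$, no edge of $G_2$ meets any edge of $G_1$ except possibly at the identified endpoints $u, v$; and the two copies of the edge $uv$ coincide as a single curve, so they are identified as the single edge $uv$ in $G$. This produces a crossing-free drawing of $G$.

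The main step to justify carefully is the freedom of choice of outer face. On the sphere every face is equivalent, so for each $G_i$ we can independently pick either of the two faces incident to $uv$ to serve as outer face in the planar embedding, which is exactly what lets the gluing go through. Beyond that, the argument is essentially topological bookkeeping, and I do not expect any genuine obstacle.
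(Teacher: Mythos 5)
The paper states this as a ``well-known fact'' and supplies no proof of its own, so there is no paper proof to compare against. Your argument is the standard one and is correct in substance: put $uv$ on the outer face of both drawings (possible exactly because $uv$ is an edge of each $G_i$, which is where the hypothesis $uv\in E(G_1),E(G_2)$ is genuinely used --- the statement is false without it), then nest one drawing in the outer face of the other along the shared edge.

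One phrase is internally inconsistent and worth tightening: you cannot simultaneously shrink the entire finite part of $G_2$'s drawing into a small disk \emph{contained in} $R$ \emph{and} have the edge $uv$ of $G_2$ land on the edge $uv$ of $G_1$, because the latter lies on $\partial R$, not in the open region $R$. What you want is a homeomorphism that fixes the arc $uv$ pointwise (or maps $G_2$'s copy of the arc onto $G_1$'s) and sends everything else of $G_2$'s drawing into $R$. This is possible because in $G_2$'s embedding the drawing occupies only the non-outer side of $uv$, so it can be enclosed (together with $uv$) in a closed Jordan disk having $uv$ on its boundary, and that disk can be homeomorphed into $\overline{R}$ with boundary arcs matching. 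With that correction the gluing is airtight. The stereographic-projection step (free choice of outer face) is exactly right.
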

\begin{claim}\label{claim:hooray}
If $|V_i|=2$, $i<D$, then there is an edge joining the two vertices of $V_i$.
\end{claim}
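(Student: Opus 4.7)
The plan is to argue by contradiction using the maximality of $E$. Write $V_i=\{u,v\}$ and suppose $uv\notin E$. I will show that $G+uv$ is planar and has the same diameter as $G$, contradicting maximality and hence forcing $uv\in E$.

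Diameter invariance is easy: any $s$-$t$ path in $G+uv$ that traverses the new edge goes (up to relabelling) $s\to\dots\to u\to v\to\dots\to t$, with length at least $dist(s,u)+1+dist(v,t)\ge i+1+(D-i)=D+1$, so the new edge cannot shorten any $s$-$t$ path; hence $dist_{G+uv}(s,t)=D$ and the diameter remains $D$. For planarity, observe that because edges only go between adjacent BFS levels, $\{u,v\}$ is a $2$-vertex-separator in $G$: no edge of $G$ joins $V_{<i}$ to $V_{>i}$. Write $G_1:=G[V_{\le i}]$ and $G_2:=G[V_{\ge i}]$, so $V(G_1)\cap V(G_2)=\{u,v\}$. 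By Fact~\ref{fact:2sums}, if both $G_1+uv$ and $G_2+uv$ are planar then their $2$-sum along $uv$ is planar, and this $2$-sum is exactly $G+uv$.

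The heart of the argument is showing $G_1+uv$ (and symmetrically $G_2+uv$) is planar. Fix a planar embedding of $G$ and consider the inherited embedding of $G_1$. The subgraph $G[V_{>i}]$, which is nonempty since $i<D$, is drawn in the complement of $G_1$, so each connected component $C$ of $G[V_{>i}]$ lies entirely in a single face $F_C$ of $G_1$. Using $2$-vertex-connectivity of $G$, I would argue that $C$ has an edge to each of $u$ and $v$: any path in $G-u$ from a vertex of $C$ to $s$ must first leave $V_{>i}$ through $V_i\setminus\{u\}=\{v\}$, and the predecessor of that exit lies in $C$ (since the walk up to that point stays in $V_{>i}$), yielding an edge from $C$ to $v$; the analogous argument in $G-v$ gives an edge from $C$ to $u$. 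Therefore both $u$ and $v$ lie on the boundary of $F_C$, so $uv$ can be drawn inside $F_C$ without crossings, proving $G_1+uv$ planar. The symmetric argument, using that $V_{<i}\neq\emptyset$ (since $|V_0|=1$ forces $i\ge1$), yields planarity of $G_2+uv$.

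The main obstacle is this planarity step, where $2$-vertex-connectivity of $G$ must be used carefully to conclude that both $u$ and $v$ appear on the boundary of one common face of $G_1$ (and of $G_2$). Once this is established, Fact~\ref{fact:2sums} glues the two sides back into a planar embedding of $G+uv$, and the contradiction with maximality of $E$ is immediate.
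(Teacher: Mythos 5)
Your proof is correct and follows essentially the same approach as the paper: show $G+uv$ is planar and has the same diameter, decompose across the $2$-separator $\{u,v\}$ via Fact~\ref{fact:2sums}, and use $2$-vertex-connectivity to show both $G_{\le i}+uv$ and $G_{\ge i}+uv$ are planar. Your argument that a connected component $C$ of $G[V_{>i}]$ has edges to both $u$ and $v$ (forcing $u,v$ onto a common face of $G_{\le i}$) is a slightly repackaged version of the paper's construction of the path $P_R$ through $V_{>i}$, and both rely on $2$-connectivity in the same way.
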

\begin{proof}
Suppose otherwise. Let $V_i = \{u, v\}.$ We will show $uv$ can be added to $G$ without violating planarity, which will complete the proof, since $G$ was chosen edge-maximal (and adding $uv$ does not change $D$).

Since $G$ is 2-vertex-connected, $u$ is not an articulation vertex, so $G[\{v\} \cup V_{>i}]$ is connected, and similarly for $G[\{u\} \cup V_{>i}]$. Thus there is a path $P_R$ from $u$ to $v$ all of whose internal vertices lie in $V_{>i}$. Likewise there is a $u$-$v$ path $P_L$ all of whose internal vertices lie in $V_{<i}$ (e.g.~concatenate shortest $u$-$s$ and $s$-$v$ paths).

Consider a drawing of $G$. The sub-drawing of $G_{\le i}$ must have $u$, $v$ on the same face due to $P_R$, so $G_{\le i} \cup \{uv\}$ is planar. Likewise $G_{\ge i} \cup \{uv\}$ is planar and using \prettyref{fact:2sums}, $G \cup \{uv\}$ is planar as needed.
\end{proof}

Recall there exists a level of size at least 3, let $L$ be chosen minimal with $|V_{L+1}| \ge 3$. Let $R$ be chosen maximal such that $R>L$, and all of the levels $V_{L+1}, V_{L+2}, \dotsc, V_{R-1}$ have size 3. Thus either $R = D+1$, or $R \le D$ and $|V_R| < 3$. We break into several similar cases now.

{\bf Case} $L>0, R < D$. Thus $|V_L|=|V_R|=2$. Consider the graph $G'$ obtained by ``surgery" from $G$ by deleting all edges in $G_{[L, R]}$, deleting the isolated vertices $V_{[L+1, R-1]}$, then adding a clique on $V_L \cup V_R$. This is a planar graph by \prettyref{fact:2sums} and \prettyref{claim:hooray}: it is obtained by two 2-sums from $G_{\le L}$, $K_4$, and $G_{\ge R}$. We illustrate in Figure~\ref{fig:mainPi}. Now $G'$ is smaller than $G$; write $\Delta D = D(G) - D(G'), \Delta m = m(G) - m(G'), \Delta n = n(G) - n(G')$. We have $\Delta n \ge 3 \Delta D$ since all deleted levels had size at least 3. Moreover, since $G_{[L, R]}$ is a planar graph Euler's bound gives that we deleted at most $3(\Delta n + 4) - 6$ edges and added 6 to the new clique, so $\Delta m \le 3 \Delta n$. Thus $\frac{4(\Delta n) - \Delta m}{3} \ge \frac{\Delta n}{3} \ge \Delta D$ and from this it is easy to verify that $G'$ is a smaller counterexample to \prettyref{thm:AuxThm}, contradicting our choice of $G$.

\begin{figure}[h]
\begin{center}
\includegraphics[scale=0.55]{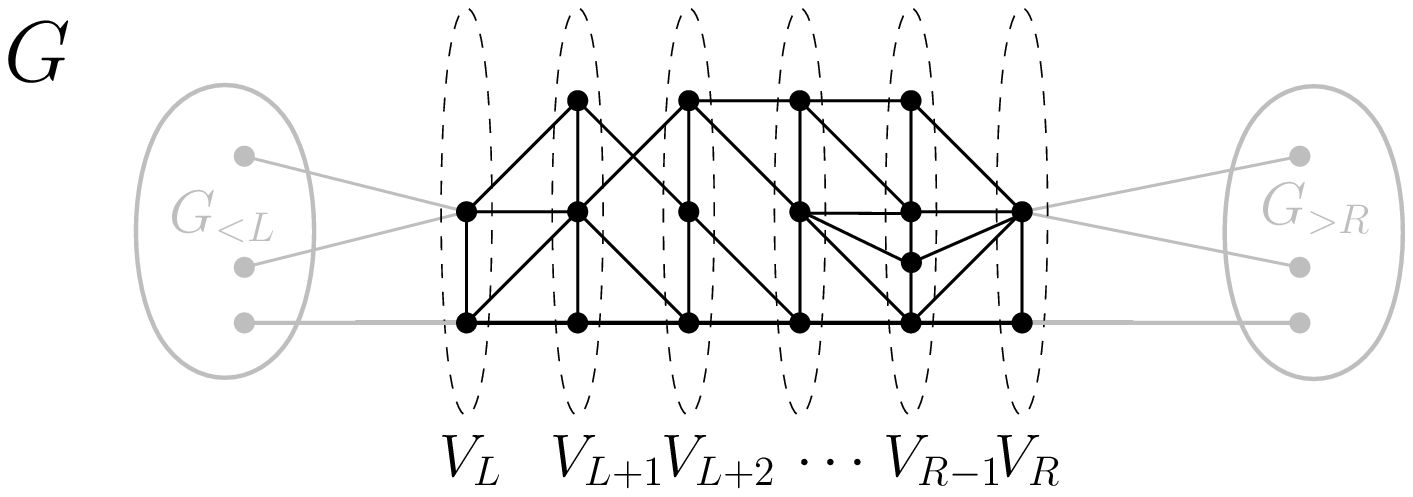}
\hfill
\includegraphics[scale=0.55]{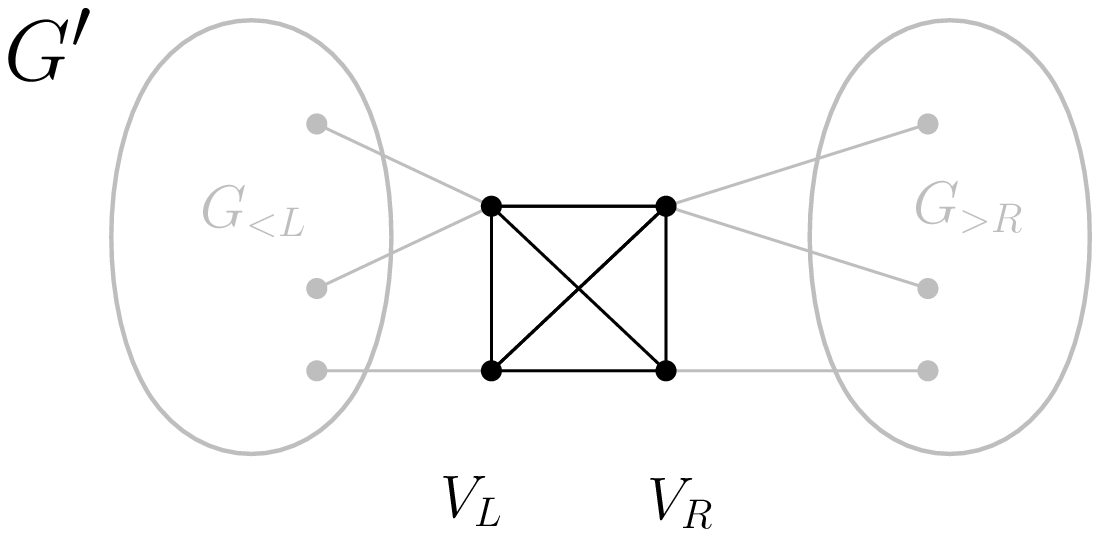}
\end{center}
\caption{Depiction of how surgery changes a graph $G$ (left) into $G'$ (right). Note the $V_i, G_i$ labels are with respect to the original graph. Gray parts are unaltered.}\label{fig:mainPi}
\end{figure}

{\bf Case} $L>0, R \in \{D, D+1\}$. Let $X = V_{> L} \bs \{t\}$. We delete all edges in $G_{\ge L}$, then the isolated vertices $X$, then we join the three vertices $V_L \cup \{t\}$ by a clique. Thus $\Delta m \le 3(\Delta n + 3) - 6 - 3 = 3\Delta n$ and we proceed as before.

{\bf Case} $L=0, R<D$ is the mirror image of the previous case (e.g.~the clique is added to $V_R \cup \{s\}$).

{\bf Case} $L=0, R \in \{D, D+1\}$. We have $n \ge 3D - 1$ since all levels in $V_{[1, D-1]}$ have size at least 3. Using Euler's bound, $4(n-1)-m \ge n+2 > 3D$ and $D < \frac{4(n-1)-m}{3}$ as needed.
\end{proof}

\section{Proof that $r(G) \ge \frac{2}{5}D(G)$ for Planar Graphs}\label{sec:funtimes}
The general idea in the proof of \prettyref{thm:MainThm} is similar to what we did in the previous section, but the devil is in the details, because the terms $1/d(v)$ change in quite complex ways when we perform surgery on the graph. For example, it is no longer possible to easily argue that the selected counterexample $G$ is 2-vertex-connected.
Here is the sketch of how we prove $r(G) \ge \frac{2}{5}D(G)$.
\begin{itemize}
\item Define the \emph{fitness} of a planar connected graph $G$ to be $\fit(G) := \frac{2}{5}D(G) - r(G)$. So we want to show no graph has positive fitness.
\item Let $n$ be minimal such that some $n$-vertex planar connected graph has positive fitness. Subject to this minimal $n$, take such a graph $G$ having maximal fitness. If another graph $G'$ exists such that $|V(G')| \le |V(G)|$ and $\fit(G') \ge \fit(G)$ and at least one of the these two inequalities is strict, this contradicts our choice of $G$. Therefore, the proof strategy uses several parts, and in each part we either find such a $G'$, or impose additional structure on $G$.
\item Let $st$ be any diameter of $G$. We show that except for $s$ and $t$, every vertex has degree at least 3, and that $s$ and $t$ have degree 2 or more.
\item We lay out the graph $G$ in levels, as in the previous proof: level $V_i$ consists of all vertices at distance $i$ from $s$, hence $\uplus_{i=0}^D V_i$ is a partition of $V$.
\item We arrive at a general ``cornerstone" theorem (\prettyref{theorem:cornerstone}) showing that in many cases, a surgery like in \prettyref{sec:initial} finds the desired $G'$.
\item We clean up some additional cases, and thereby prove that $G$ has at most 3 nodes per level, that no size-3 levels are adjacent, that for every size-2 level the contained nodes share an edge, and that the last level $V_D$ has size 1.
\item We use a computation (\prettyref{sec:computation}) to prove that this structured graph has $\fit(G) < 0$, completing the proof.
\end{itemize}

\subsection{Preliminaries}
We reiterate the main tool in the proof.
\begin{claim}\label{claim:useful}
If $G'$ is another graph obtained from $G$, with $n(G') < n(G)$, such that $D(G') \ge D(G) - \Delta D$, $r(G') \le r(G) - \Delta r$, and $\Delta r \ge \frac{2}{5} \Delta D$, then $G'$ is smaller but at least as fit as $G$, contradicting our choice of $G$.
\end{claim}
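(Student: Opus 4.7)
The claim is essentially a bookkeeping statement unpacking the definition $\fit(G) = \frac{2}{5}D(G) - r(G)$, so my plan is a short direct computation followed by a contradiction with the extremal choice of $G$. I will implicitly assume, as the surrounding discussion does, that ``another graph obtained from $G$'' means $G'$ is also planar and connected; otherwise the comparison of fitness does not make sense.

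First I would substitute the hypotheses into the definition of fitness for $G'$:
\[
\fit(G') \;=\; \tfrac{2}{5}D(G') - r(G') \;\ge\; \tfrac{2}{5}\bigl(D(G) - \Delta D\bigr) - \bigl(r(G) - \Delta r\bigr) \;=\; \fit(G) + \Delta r - \tfrac{2}{5}\Delta D.
\]
The hypothesis $\Delta r \ge \tfrac{2}{5}\Delta D$ then yields $\fit(G') \ge \fit(G)$.

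Next I would invoke the choice of $G$. Recall $n$ was picked minimal so that some planar connected $n$-vertex graph has positive fitness, and then $G$ was chosen with maximum fitness among such graphs. Since $\fit(G) > 0$ by hypothesis, the inequality just derived gives $\fit(G') > 0$ as well, so $G'$ is a planar connected graph of positive fitness. But $n(G') < n(G) = n$, contradicting the minimality of $n$. This completes the proof.

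I do not expect any real obstacle here; the only thing to be careful about is noting that the argument needs $G'$ to still be planar and connected (which is the implicit convention of the section) and that the strict inequality $\fit(G) > 0$ is what forces $G'$ into the extremal class whose vertex count was supposed to be minimized. The claim is then used repeatedly in the rest of the proof as the ``termination certificate'' for each surgery.
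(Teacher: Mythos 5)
Your proof is correct and matches the paper's intent exactly; the claim is an immediate bookkeeping consequence of the definition of fitness together with the extremal choice of $G$, and the paper itself does not spell out the computation.
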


Since adding an edge decreases $r$ and increases fitness, we also have the following.
\begin{claim}[Maximality]
If $uv \not\in E$ then either $G \cup \{uv\}$ is non-planar or $D(G \cup \{uv\})<D(G)$. In particular, when $u$ and $v$ are in the same levels or adjacent levels, since adding $uv$ would not change the diameter, we have that $G \cup \{uv\}$ is non-planar.
\end{claim}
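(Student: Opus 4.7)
The plan is to prove both sentences of the claim by contradiction, leveraging the extremality setup stated just before the claim: $G$ has positive fitness, $n(G)$ is minimum among such graphs, and $\fit(G)$ is maximum subject to those two constraints. Set $G' := G \cup \{uv\}$. If the first sentence fails, then $G'$ is planar and $D(G') \ge D(G)$, and the goal is to show $G'$ is a strictly fitter counterexample of the same size as $G$, contradicting the choice of $G$.

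Since adding an edge can only shorten distances, one automatically has $D(G') \le D(G)$; combined with the failure assumption this forces $D(G') = D(G)$. Next, inserting $uv$ raises $d(u)$ and $d(v)$ each by $1$ and leaves every other degree unchanged, so
\[
r(G') - r(G) \;=\; \left(\tfrac{1}{d(u)+1} - \tfrac{1}{d(u)}\right) + \left(\tfrac{1}{d(v)+1} - \tfrac{1}{d(v)}\right) \;<\; 0,
\]
a \emph{strict} decrease, using that $G$ is connected on at least two vertices so $d(u),d(v) \ge 1$. Combining $D(G') = D(G)$ with $r(G') < r(G)$ gives $\fit(G') > \fit(G) > 0$, and $G'$ has the same vertex count as $G$ while remaining connected and planar. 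This is exactly the configuration ruled out by the extremality of $G$ (it fits \prettyref{claim:useful} with $\Delta D = 0$ and $\Delta r > 0$ in the boundary ``same $n$'' regime), completing the first sentence.

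For the ``in particular'' clause, suppose $u \in V_i$, $v \in V_j$ with $|i-j|\le 1$ and $uv \notin E$. It suffices to verify $D(G \cup \{uv\}) = D(G)$; then the first sentence immediately forces $G \cup \{uv\}$ to be non-planar. Since adding an edge only decreases distances, $D(G') \le D(G)$, so I only need $\mathrm{dist}_{G'}(s,t) \ge D$. Any $s$-$t$ walk in $G'$ that uses $uv$ has length at least $\mathrm{dist}_G(s,u) + 1 + \mathrm{dist}_G(v,t) = i + 1 + (D - j)$, or symmetrically $j + 1 + (D - i)$, both of which are $\ge D$ when $|i-j| \le 1$. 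Hence $\mathrm{dist}_{G'}(s,t) = D$ and so $D(G') = D(G)$ as required.

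The main potential obstacle is purely bookkeeping: keeping track of the extremal choice of $G$ and verifying that the drop in $r$ is strict (automatic from connectedness). No deeper structural property of $G$ is used, and the level hypothesis enters only through the elementary triangle-inequality estimate in the last paragraph.
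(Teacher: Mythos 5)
Your proof is correct and fills in the detail behind the paper's one-line justification (``Since adding an edge decreases $r$ and increases fitness, we also have the following''), so the two arguments are essentially identical. One small slip: in the ``in particular'' part you write $\mathrm{dist}_G(v,t) = D - j$, but for a general $v \in V_j$ this is only an inequality $\mathrm{dist}_G(v,t) \ge D - j$ (equality requires $v$ to lie on a shortest $s$--$t$ path); the chain still gives $\ge D$, so the conclusion is unaffected. You are also right that \prettyref{claim:useful} as stated demands $n(G') < n(G)$, so the contradiction here is really via the general extremality sketch (same $n$, strictly larger fitness) rather than the literal claim.
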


We will repeatedly make use of the arithmetic-harmonic mean in the following way.
\begin{proposition}\label{proposition:amhm}
For any set $S$ of vertices, $\sum_{v \in S} 1/d(v) \ge |S|^2/(\sum_{v \in S} d(v))$.
\end{proposition}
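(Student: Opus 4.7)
The plan is to recognize this as an immediate consequence of the Cauchy--Schwarz inequality, applied to the multiset of degrees $\{d(v) : v \in S\}$. Since $G$ is a connected graph with at least two vertices, every $d(v) \ge 1$, so all the quantities involved are well-defined positive reals, and I can freely take square roots and reciprocals.

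Concretely, I would apply Cauchy--Schwarz to the two vectors indexed by $v \in S$ with entries $\sqrt{d(v)}$ and $1/\sqrt{d(v)}$ respectively:
$$|S|^2 \;=\; \Bigl(\sum_{v \in S} \sqrt{d(v)} \cdot \tfrac{1}{\sqrt{d(v)}}\Bigr)^{\!2} \;\le\; \Bigl(\sum_{v \in S} d(v)\Bigr) \Bigl(\sum_{v \in S} \tfrac{1}{d(v)}\Bigr).$$
Dividing through by the positive quantity $\sum_{v \in S} d(v)$ yields the stated bound. Equivalently, one may cite the arithmetic--harmonic mean inequality on the $|S|$ positive values $d(v)$, or invoke Jensen's inequality for the convex function $x \mapsto 1/x$; all three routes are essentially the same one-line computation.

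There is no real obstacle here; the statement is a standard inequality and is included only to fix notation for later invocations in \prettyref{sec:funtimes}. The only point worth explicitly noting is the positivity of the degrees, which follows from the standing assumption that $G$ is a connected planar graph on at least two vertices (so $d(v) \ge 1$ for all $v$).
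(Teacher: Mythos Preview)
Your proof is correct and matches the paper's intent: the paper does not spell out a proof but simply labels the proposition as the arithmetic--harmonic mean inequality, which is exactly one of the equivalent formulations you list. Your Cauchy--Schwarz derivation is a standard one-line justification of AM--HM, so there is no substantive difference.
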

Thus, the contribution to $r$ by any set is at least as big as what it would give ``on average" by counting all endpoints incident on $S$. Later, we will count $\sum_{v \in S} d(v)$ as twice the number of edges of $G[S]$, plus the number of edges with exactly one endpoint in $S$.

\comment{Said in a false and very rough way, our hope is as follows: the $x$ deleted vertices have average degree ``roughly" 6 or less, hence deleting them decreases $r$ by $x/6$ or more (ignoring what happens at boundary vertices) and decreases the diameter by $x/3$ or less. Thus, roughly, $r(G') - \frac{2}{5}D(G') \le r(G) - x/6 - \frac{2}{5}(D(G)-x/3) \le r(G) - \frac{2}{5}D(G) < 0$ and so $G'$ is a smaller counterexample than $G$, contradicting our choice of $G$ as having a minimal number of vertices. But the boundary effects we are ignoring are significant and take a lot of work to clean up rigorously.}

Suppose that every level of $G$, except possibly the first and last ($V_0$ and $V_D$) have size 3. Then $n \ge 3(D-1) + 2$ and the following proposition shows such graphs are not problematic.
\begin{proposition}\label{proposition:simplecase}
If $n \ge 3(D-1)+2$, then $r(G) \ge \frac{2}{5}D$.
\end{proposition}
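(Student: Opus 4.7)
The plan is a short computation using the arithmetic-harmonic mean and Euler's bound. First I would apply \prettyref{proposition:amhm} to the entire vertex set $V$, obtaining
\[
r(G) \;=\; \sum_{v\in V}\frac{1}{d(v)} \;\ge\; \frac{n^2}{\sum_{v\in V} d(v)} \;=\; \frac{n^2}{2m}.
\]
Next I would invoke Euler's bound. Assuming $n\ge 3$ (the cases $n\le 2$ are trivial: e.g.\ if $n=2$ then $D\le 1$ and $r\ge 2$), planarity gives $m\le 3n-6$, so $2m\le 6(n-2)$ and therefore
\[
r(G) \;\ge\; \frac{n^2}{6(n-2)}.
\]

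Second, I would translate the hypothesis $n\ge 3(D-1)+2$ into an upper bound on $D$, namely $D\le (n+1)/3$. The goal $r(G)\ge \tfrac{2}{5}D$ then reduces to the purely algebraic inequality
\[
\frac{n^2}{6(n-2)} \;\ge\; \frac{2}{5}\cdot\frac{n+1}{3} \;=\; \frac{2(n+1)}{15}.
\]
Clearing denominators (valid since $n\ge 3$ makes $n-2>0$) gives $15n^2 \ge 12(n-2)(n+1)$, i.e.\ $3n^2+12n+24\ge 0$, which holds for all $n$. This closes the proof.

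I do not expect a real obstacle here: the statement is designed to be a simple consequence of AM-HM together with the planarity bound. The only subtlety is that the AM-HM step is not tight unless $G$ is close to regular, and Euler's bound is not tight unless $G$ is a triangulation; but we do not need tightness, only the inequality, so both tools can be applied in their weakest forms. A minor bookkeeping item is to confirm that the rounded hypothesis really does yield $D\le (n+1)/3$ for every integer $n$ satisfying $n\ge 3D-1$, and to dispose of the corner cases $n\in\{1,2\}$ (or equivalently $D\in\{0,1\}$) where Euler's bound is not in force — in each such case the claim is immediate by direct inspection.
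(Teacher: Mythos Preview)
Your proof is correct and follows essentially the same route as the paper: apply \prettyref{proposition:amhm} with $S=V$ to get $r\ge n^2/(2m)$, use Euler's bound $m\le 3n-6$ to obtain $r\ge n^2/(6(n-2))$, rewrite the hypothesis as $D\le (n+1)/3$, and verify the resulting quadratic inequality after cross-multiplying. The paper's proof is the same argument, just with the algebraic verification left to the reader.
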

\begin{proof}
The case that $|n| < 3$ is easy to verify, so assume $|E| \le 3n-6$. \prettyref{proposition:amhm} applied to $S=V$ implies that $r \ge n^2/(6n-12)$, and by hypothesis $D \le (n+1)/3$. Therefore it is enough to prove $n^2/(6n-12) \ge \frac{2}{5}(n+1)/3$, which is easy to verify by cross-multiplying and solving the resulting quadratic.
\end{proof}

\subsection{Small-Degree Vertices and Articulation Points}
\begin{proposition}
$G$ does not have a degree-1 vertex.
\end{proposition}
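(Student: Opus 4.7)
The plan is to apply \prettyref{claim:useful} with $G' := G - v$, where $v$ is the purported degree-$1$ vertex. Suppose for contradiction that such a $v$ exists, with unique neighbor $u$. First I will dispose of the trivial case $n(G) \le 2$: if $n=2$, then $G$ is a single edge with $D=1$ and $r=2$, which is not a counterexample; hence I may assume $n \ge 3$, which forces $d(u) \ge 2$ (otherwise $G$ would be disconnected after $v$).

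Next I will compute the change in inverse degree exactly. Removing $v$ deletes its contribution $1/d(v) = 1$ and replaces $u$'s contribution $1/d(u)$ with $1/(d(u)-1)$, giving $\Delta r := r(G) - r(G') = 1 - \tfrac{1}{d(u)(d(u)-1)}$. Since $d(u) \ge 2$, this is at least $\tfrac{1}{2}$.

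Then I will bound the change in diameter. Because $v$ is a leaf it cannot lie as an interior vertex on any shortest path between two other vertices, so all pairwise distances among vertices of $V(G')$ are preserved. Hence $D(G') = D(G)$ unless $v \in \{s,t\}$; in the worst case (say $v=t$), the distance $dist_{G'}(s,u) = D(G)-1$ still exists, so $D(G') \ge D(G)-1$ and $\Delta D \le 1$. Note also that $G'$ is connected and planar, so it is a valid candidate for \prettyref{claim:useful}.

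Combining the two estimates, $\Delta r \ge \tfrac{1}{2} > \tfrac{2}{5} \ge \tfrac{2}{5}\Delta D$, and \prettyref{claim:useful} yields the desired contradiction. There is no substantial obstacle here; the only care needed is the trivial base case ensuring $d(u) \ge 2$, so that the formula for $\Delta r$ is valid and positive.
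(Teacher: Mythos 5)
Your proof is correct and follows essentially the same approach as the paper: delete the leaf $v$, compute $\Delta r \ge 1/2$ from the change at $v$ and its neighbour, note $\Delta D \le 1$, and apply Claim~\ref{claim:useful}. The extra detail you give about why the diameter drops by at most one (the leaf cannot be interior to a shortest path) is a slightly more careful version of the paper's one-line remark, but the argument is the same.
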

\begin{proof}
Let $v$ be a degree-1 vertex with neighbour $z$. We may assume $|V| \ge 3$ so $d(z) \ge 2$. How do $r$ and $D$ change if we get another graph $G'$ by deleting $v$? Clearly $D$ decreases by at most 1; and $r(G') = r(G) - \frac{1}{1} - \frac{1}{d(z)} + \frac{1}{d(z)-1} \le r(G) - 1/2$. In \prettyref{claim:useful} take $\Delta D = 1$ and $\Delta r = 1/2$, we are done.
\end{proof}

A repeated issue is that $r$ is not monotonic, i.e.~sometimes we can decrease $r$ in a graph by adding extra vertices (e.g.~consider the complete bipartite graphs, where $r(K_{2,10} < r(K_{1, 10})$). The following proposition is a first attack against this issue and shows that adding extra blocks (2-vertex-connected components) cannot decrease $r$.
\begin{proposition}\label{proposition:articulation}
If $v$ is an articulation vertex of $G$, then $G \bs v$ has exactly two connected components, one containing $s$ and one containing $t$.
\end{proposition}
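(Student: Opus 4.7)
The plan is to derive a contradiction by excising any component of $G\setminus v$ that avoids $\{s,t\}$. Specifically, suppose that some component $C$ of $G\setminus v$ contains neither $s$ nor $t$, and consider $G' := G\setminus V(C)$. Then $G'$ is planar (as a subgraph of $G$), connected (since every surviving component of $G\setminus v$ remains joined to $v$), and strictly smaller than $G$. My goal is to show $D(G')\ge D(G)$ and $r(G')\le r(G)$; together with $\fit(G)>0$ these would give $\fit(G')\ge\fit(G)>0$, contradicting the minimality of $|V(G)|$.

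The diameter bound is immediate: deleting vertices never shortens paths and $s,t\in V(G')$, so $D(G')\ge \text{dist}_{G'}(s,t)\ge\text{dist}_G(s,t)=D(G)$. The main obstacle will be controlling the inverse degree. Writing $e$ for the number of edges from $v$ to $C$, only the degrees of $v$ and of the deleted vertices change, so
$$r(G)-r(G') \;=\; \sum_{u\in C}\frac{1}{d_G(u)} \;+\; \frac{1}{d_G(v)} - \frac{1}{d_G(v)-e}.$$
Two observations will make this non-negative. First, every neighbour of every $u\in C$ lies in $C\cup\{v\}$, so $d_G(u)\le|C|$; moreover the previous proposition forbids degree-$1$ vertices, so $|C|\ge 2$ (otherwise the sole vertex of $C$ would have only $v$ as a neighbour). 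Hence $\sum_{u\in C}1/d_G(u)\ge|C|\cdot(1/|C|)=1$. Second, because $v$ is an articulation vertex it must have a neighbour outside $C$, so $d_G(v)\ge e+1$, and therefore
$$\frac{1}{d_G(v)-e}-\frac{1}{d_G(v)} \;=\; \frac{e}{d_G(v)(d_G(v)-e)} \;\le\; \frac{e}{e+1} \;<\; 1.$$
Combining these estimates gives $r(G)-r(G')>0$, as desired.

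With the contradiction in hand, every component of $G\setminus v$ must meet $\{s,t\}$. If $s$ and $t$ lay in the same component then, since any further component would also have to contain $s$ or $t$, no further component could exist, contradicting the fact that $v$ is an articulation vertex. Hence $s$ and $t$ lie in distinct components, and since each of the (at most two) components contains exactly one of them, $G\setminus v$ has exactly two components — one containing $s$ and one containing $t$, as claimed.
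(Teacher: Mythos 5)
Your proof is correct and follows essentially the same approach as the paper's: excise the offending component, observe that the $s$-$t$ distance can only increase, bound the degrees of the deleted vertices by the component size to get $\sum_{u\in C}1/d_G(u)\ge 1$, and bound the change at $v$ by at most $1$. The only differences are cosmetic: your bound $e/(e+1)$ on the change at $v$ is tighter than the paper's crude $1/d_G(v)-1/d_{G\setminus H}(v)\ge 0-1$, and your observation that $|C|\ge 2$ is true but never actually used (the estimate $\sum_{u\in C}1/d_G(u)\ge 1$ already holds when $|C|=1$).
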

\begin{proof}
If the proposition is false, there is an articulation vertex $v$ such that a connected component $H$ of $G \bs \{v\}$ contains neither $s$ nor $t$. Thus $G \bs H$ contains $s$ and $t$, moreover $D(G \bs H) = D(G)$ since any simple $s$-$t$ path goes through $v$ at most once and hence does not use any vertex of $H$.

We want to argue that $r(G \bs H) \le r(G)$, which will complete the proof using \prettyref{claim:useful} with $\Delta D = \Delta r = 0$. It is enough to use very crude degree estimates. Let $|V(H)|=k$. Each vertex of $H$ has degree at most $k$ in $G$ since each $u \in V(H)$ can only have neighbours in $V(H) \cup \{v\} \bs \{u\}$. Moreover, the difference between $r(G \bs H)$ and $r(G)$ is due only to vertices in $\{v\} \cup V(H)$. Clearly $v$ has at least one neighbour not in $H$. Then $$r(G) = r(G \bs H) + \sum_{u \in H} \frac{1}{d_G(u)} + \frac{1}{d_G(v)} - \frac{1}{d_{G \bs H}(v)} \ge r(G \bs H) + \frac{k}{k}+ 0 - 1 =  r(G \bs H),$$
as needed.
\end{proof}

\begin{proposition}\label{proposition:no2}
Except possibly $s$ and $t$, $G$ does not have a degree-2 vertex.
\end{proposition}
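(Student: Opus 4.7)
Assume for contradiction that $v\neq s,t$ has $d(v)=2$, with neighbors $x,y$. I split on whether $xy\in E$ and, in both cases, produce a smaller graph $G'$ with $\Delta r\geq\tfrac{2}{5}\Delta D$, contradicting \prettyref{claim:useful}. When $xy\notin E$, apply the ``smoothing'' surgery $G':=(G\setminus v)+xy$: the new edge routes alongside the removed path $xvy$, preserving planarity and connectivity. A $G$-path through $v$ must contain $xvy$ and becomes a $G'$-path of length one less via $xy$; a $G'$-path using $xy$ expands to a $G$-path via $xvy$ of length one more. Hence $d_G(u,w)-1\leq d_{G'}(u,w)\leq d_G(u,w)$, so $\Delta D\leq 1$; since $d(x),d(y)$ are unchanged (each trades $v$ for the other), $\Delta r=1/2>2/5$, and \prettyref{claim:useful} concludes.

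When $xy\in E$, take $G':=G\setminus v$. The triangle $xvy$ permits the shortcut $xy$, so $v$ lies on no geodesic and $d_{G'}(s,t)=d_G(s,t)$, giving $\Delta D\leq 0$. Here $\Delta r=\tfrac12-\tfrac{1}{d(x)(d(x)-1)}-\tfrac{1}{d(y)(d(y)-1)}$, which is $\geq 1/6>0$ whenever $d(x),d(y)\geq 3$---done by \prettyref{claim:useful}. The remaining sub-case is WLOG $d(x)=2$: then $x$'s only neighbors are $v,y$, and removing $y$ leaves $\{v,x\}$ as an isolated component joined by the edge $vx$. For $n\geq 4$, $y$ is therefore an articulation vertex, and by \prettyref{proposition:articulation} the component $\{v,x\}$ must contain $s$ or $t$; since $v\notin\{s,t\}$, WLOG $x=s$. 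The degenerate possibilities ($n=3$, or $d(y)=2$ collapsing $G$ to the triangle $svy$) are easy, so assume $d(y)\geq 3$.

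Now $v$ and $y$ are both neighbors of $s=x$ with $vy\in E$. If $d(y)\geq 4$, I delete $\{s,v\}$; the pendant-triangle structure forces every simple $G$-path between vertices of $V\setminus\{s,v\}$ to avoid $\{s,v\}$ (any entry to $\{s,v\}$ through $y$ cannot exit simply), so distances are preserved, and from $d_G(y,t)\geq D-1$ I get $\Delta D\leq 1$, while $\Delta r=1-\tfrac{2}{d(y)(d(y)-2)}\geq 3/4>2/5$. If $d(y)=3$, let $z$ be $y$'s third neighbor and instead delete the entire pendant triangle $\{s,v,y\}$: the same path-avoidance argument and $d_G(z,t)\geq D-2$ give $\Delta D\leq 2$, and $\Delta r=\tfrac43-\tfrac{1}{d(z)(d(z)-1)}\geq 5/6$.

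The main obstacle is this final sub-case, $d(y)=3$. The correct surgery excises the \emph{entire} pendant triangle $\{s,v,y\}$ rather than just the two degree-$2$ vertices; and the final estimate is tight, since $\tfrac{2}{5}\cdot 2=\tfrac{4}{5}$ is beaten by $\tfrac{5}{6}$ only by $\tfrac{1}{30}$. Recognizing that the structure forced by $d(x)=2$ via the articulation proposition is precisely a pendant triangle---the natural unit to remove---and then matching the deletion size to the diameter decrease is the delicate step.
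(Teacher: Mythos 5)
Your proof is correct, and in the final sub-case it departs genuinely from the paper's argument. For the three easier sub-cases you match the paper: $xy\notin E$ — contract (delete $v$, add $xy$), $\Delta r=1/2$, $\Delta D\le 1$; $xy\in E$ with both $d(x),d(y)\ge 3$ — just delete $v$, $\Delta r\ge 1/6$, $\Delta D\le 0$; $xy\in E$ with both degree $2$ — $G=K_3$, done. Where you diverge is the pendant-triangle sub-case ($xy\in E$, $d(x)=2$, $d(y)\ge 3$, forcing $x=s$ via \prettyref{proposition:articulation}): the paper disposes of it by edge-maximality and planarity, observing that $b$ has a neighbour $y'$ on a common face with $bv$ and that adding $vy'$ keeps the graph planar without changing the diameter (since $v$ lies on no geodesic), a contradiction. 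You instead stay within the surgery/fitness framework: delete $\{s,v\}$ when $d(y)\ge 4$ (giving $\Delta r=1-\frac{2}{d(y)(d(y)-2)}\ge 3/4$ against $\Delta D\le 1$), and delete the whole triangle $\{s,v,y\}$ when $d(y)=3$ (giving $\Delta r=\frac{4}{3}-\frac{1}{d(z)(d(z)-1)}\ge 5/6$ against $\Delta D\le 2$, using $d(z)\ge 2$ from the no-degree-$1$ proposition). Both branches satisfy \prettyref{claim:useful}, and your distance-preservation and connectivity claims (via \prettyref{proposition:articulation} applied to the articulation vertex $y$) check out. The paper's route is shorter and cleaner but leans on an embedding/face argument; yours is slightly longer and requires tracking the borderline constant $\frac{5}{6}>\frac{4}{5}$, but it is uniform with the surgery philosophy of the rest of the section and avoids reasoning about the plane drawing entirely.
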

\begin{proof}
Let $v \not\in\{s, t\}$ be a degree-2 vertex, with neighbours $a, b$. If $a$ and $b$ are non-adjacent, we can remove $v$ and directly connect them, which decreases $r$ by 1/2 and decreases $D$ by at most 1, which yields a contradiction by \prettyref{claim:useful}.

Therefore assume $a$ and $b$ are adjacent. If both $a$ and $b$ have degree 2 then $G = K_3$ and $\fit(G) < 0$, so we are done. If both $a$ and $b$ have degree at least 3, since $v \not\in \{s, t\}$, $G \bs \{v\}$ is a connected planar graph with diameter at least as large as $G$ and $r(G') \ge r(G) - 1/2 + 1/6 + 1/6 \ge r(G)$, so we are done by using \prettyref{claim:useful} with $\Delta D = \Delta r = 0$.

The final case is that $a$ has degree 2 (w.l.o.g.) and $b$ has degree at least 3. Then $b$ is an articulation vertex, implying by \prettyref{proposition:articulation} that $a \in \{s, t\}$, say w.l.o.g.~$a=s$, and $t \not\in \{v, a, b\}$. But this contradicts edge-maximality in the following way: let $by$ for $y \not\in \{a, v\}$ be an edge on a common face with $bv$ (see Figure \ref{fig:propDeg2}), then adding $vy$ to $G$ does not change the diameter.
\end{proof}

\begin{figure}[h]
\centering
 \subfigure[]{
\includegraphics[scale=0.9]{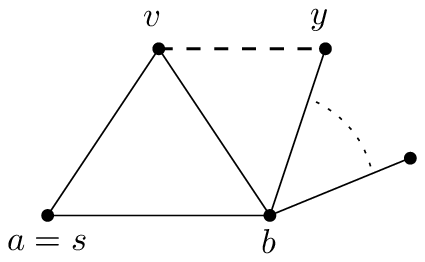}
\label{fig:propDeg2} }  \hspace{10mm}
 \subfigure[]{
\includegraphics[scale=0.9]{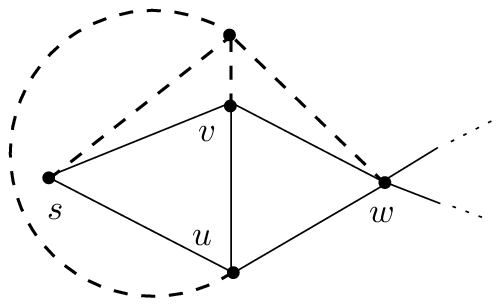}
\label{fig:distance2} }
\caption{Dashed edges are added without violating planarity. (a) The edge $vy$ contradicting the edge-maximality. (b)  The distance 2 neighbourhood of $s$ after $\omega$-$\mu$ surgery and the added edges.}
\end{figure}

\subsection{Basic Surgery: Case Analysis and Bonuses}
The central idea for surgery comes from the first case of \prettyref{thm:AuxThm}'s proof.
\begin{definition}\label{definition:surgery}
Given two levels $V_L$ and $V_R$, to apply \emph{surgery at $V_L$ and $V_R$} means to delete all nodes in $V_{[L+1, R-1]}$ (and their incident edges) and then to connect each $u \in V_L$ to each $v \in V_R$ (we ``add a biclique").
\end{definition}
We say a level of size 2 is \emph{connected} if its vertices share an edge, and that a level of size 1 is always connected. Assuming the levels are connected and of size  at most 2, \prettyref{definition:surgery} is indeed the same surgery as in \prettyref{sec:initial}. As before we get:
\begin{proposition}
Suppose $|V_L|, |V_R| \le 2$ are connected levels with $L < R$. Surgery at $V_L$ and $V_R$ yields a connected planar graph $G'$ with $D(G') = D(G) - (R-L-1)$.
\end{proposition}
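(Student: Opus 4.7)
The proposition asserts three things about $G'$: planarity, connectivity, and the diameter equation $D(G') = D(G) - (R-L-1)$. Planarity and connectivity follow by extensions of the $2$-sum/$1$-sum argument in the proof of \prettyref{thm:AuxThm}, while the diameter equality is where the main work goes.

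For planarity, I would realize $G'$ as three planar pieces glued along $V_L$ and $V_R$. Define the ``middle piece'' $M$ on vertex set $V_L \cup V_R$ to consist of the intra-level edges of $V_L$ and $V_R$ together with the added biclique; since both levels are connected and of size at most $2$, $M$ is the complete graph on at most $4$ vertices, hence planar. The graph $G'$ is then obtained by gluing $M$ to $G_{\le L}$ along $V_L$ and to $G_{\ge R}$ along $V_R$: each gluing is a $2$-sum if the relevant level has size $2$ (sharing an edge) or a $1$-sum if size $1$ (sharing only a vertex), and both preserve planarity via \prettyref{fact:2sums}. For connectivity, I would observe that $G_{\le L}$ is connected because every shortest $s$-path to a vertex in $V_{\le L}$ stays inside $V_{\le L}$, and that each component of $G_{\ge R}$ meets $V_R$ (any shortest $s$-path to a vertex $v$ at level $\ge R$ hits $V_R$ and remains in $V_{\ge R}$ from then on). The biclique ties each component of $G_{\ge R}$ to $G_{\le L}$ through $V_L$, so $G'$ is connected.

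The diameter equation is the main obstacle. For the lower bound $D(G') \ge D - (R-L-1)$, it suffices to show $\mathrm{dist}_{G'}(s, t) \ge D - (R-L-1)$, and my plan is a parity argument on biclique crossings. In $G'$ the sets $V_{\le L}$ and $V_{\ge R}$ are joined only by biclique edges, so any $s$-$t$ path $P$ crosses the biclique some number of times, alternating direction. Since $s \in V_{\le L}$ and $t \in V_{\ge R}$, the number $a$ of forward crossings (from $V_L$ to $V_R$) exceeds the number $b$ of backward crossings by exactly one, so $a + b \ge 1$. Summing the signed level changes $\Delta\ell$ along $P$ gives $\sum \Delta\ell = D$, where each biclique edge contributes $\pm(R-L)$ and each non-biclique edge contributes a value in $\{-1, 0, 1\}$. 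Substituting $(a-b)(R-L) = R-L$ for the biclique total, the non-biclique edges must supply $D - (R-L)$ in signed total, and since there are at most $|P| - a - b$ of them, this yields $|P| \ge D - (R-L) + (a+b) \ge D - (R-L-1)$. For the matching upper bound $D(G') \le D - (R-L-1)$, I would proceed by case analysis on where the two endpoints lie: the cross-side case $x \in V_{\le L}, y \in V_{\ge R}$ admits a biclique shortcut of the form $x \to V_L \to V_R \to y$, and the same-side cases reduce to bounding $\mathrm{dist}_{G'}$ within $G_{\le L}$ or $G_{\ge R}$ (with possible biclique detour), using the fact that $s$ and $t$ are diameter endpoints of $G$ to ensure the relevant distances stay within the target $D - (R-L-1)$.
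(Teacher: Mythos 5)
Your decomposition into planarity, connectivity, and the diameter bound is the right framing, and two of the three are handled cleanly. The planarity argument (gluing $G_{\le L}$, a small complete graph $M$ on $V_L \cup V_R$, and $G_{\ge R}$ via $1$-sums and $2$-sums through the connected levels) is exactly the mechanism the paper invokes in the surgery case of \prettyref{thm:AuxThm}, and your connectivity argument is correct. The parity/level-counting argument for the lower bound $D(G') \ge D(G) - (R-L-1)$ is a nice, fully rigorous version of something the paper leaves implicit (``As before we get''): since all edges of $G'$ between $V_{\le L}$ and $V_{\ge R}$ are biclique edges, the side-switching count $a-b=1$ forces at least one biclique crossing, and the signed-level-change bookkeeping then gives $|P| \ge D - (R-L-1)$ for any $s$-$t$ path. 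This is correct, and it is in fact all that the paper's later fitness arguments require: \prettyref{claim:useful} only needs a lower bound on $D(G')$ (a larger diameter after surgery only helps fitness).

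The gap is in your upper bound $D(G') \le D(G) - (R-L-1)$, which you treat as a sketch. The cross-side case is fine (using $dist_{G_{\le L}}(x,V_L) \le L$, which follows from $dist_G(x,t) \le D$ and the fact that every $x$-$t$ path must cross $V_L$ and $V_R$). But the same-side case is not just a routine check: for $x,y \in V_{\le L}$, the natural estimate $dist_{G'}(x,y) \le dist_{G_{\le L}}(x,V_L) + 1 + dist_{G_{\le L}}(V_L,y) \le 2L+1$ only beats $D-(R-L-1)$ when $L+R \le D$, and your appeal to ``$s$ and $t$ being diameter endpoints'' does not by itself close the case $L+R > D$. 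Worse, a shortest $x$-$y$ path in $G$ that avoids $V_{[L+1,R-1]}$ survives intact in $G'$, giving only $dist_{G'}(x,y) \le dist_G(x,y) \le D$, which is weaker than the target. If you want the equality you should either supply the missing case analysis or --- more in keeping with the paper's actual needs --- only assert and prove $D(G') \ge D(G) - (R-L-1)$, which your parity argument already establishes and which is what the surgery machinery of \prettyref{sec:funtimes} consumes.
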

We need a collection of \emph{types} (cases) for our analysis. There are 7 types and $V_L$ may satisfy one or none of them (i.e.~the cases are not exhaustive; nonetheless they form the core of our arguments). Analogous cases for $V_R$ are explained afterwards. Here are the 7 types for $V_L$:
\begin{packed_enum}
\item[$\omega$:] $L=0$, i.e.~the level contains one end of the diameter $st$; for all other cases, $L>0$.
\item[$\alpha$:] $|V_L|=1$ and the node in $V_L$ has 1 neighbour in $V_{L-1}$
\item[$\beta$:] $|V_L|=1$ and the node in $V_L$ has 2 neighbours in $V_{L-1}$
\item[$\beta'$:] $|V_L|=1$ and the node in $V_L$ has $\ge2$ neighbours in $V_{L-1}$ and $\ge2$ neighbours in $V_{L+1}$
\item[$\mu$:] $|V_L|=2$, $V_L$ is connected, and each node of $V_L$ has 1 neighbour in $V_{L-1}$, in fact the same one
\item[$\nu$:] $|V_L|=2$, $V_L$ is connected, and each node of $V_L$ has 2 neighbours in $V_{L-1}$
\item[$\nu'$:] $|V_L|=2$, $V_L$ is connected, and each node of $V_L$ has $\ge 2$ neighbours in $V_{L-1}$ and $\ge 2$ neighbours in $V_{L+1}$
    \end{packed_enum}
The analogous cases for the right-hand side are the same with $L=0, L>0$ replaced by $R=D, R<D$, $V_L$ replaced by $V_R$, $V_{L-1}$ replaced by $V_{R+1}$, and $V_{L+1}$ replaced by $V_{R-1}$ (note the sign changes).

Fix $V_L, V_R$ each of size $\le 2$ with $L<R$, such that all levels in between have size at least 3.
Our proof's cornerstone, which we complete at the end of \prettyref{sec:funnycase}, is to show that when $L$ and $R$ are each of one of the 7 types, provided there are at least 4 nodes between $V_L$ and $V_R$, we can get a smaller $G'$ which is at least as fit as $G$, by using surgery and some other ``bonus" operations, contradicting our choice of $G$. After this cornerstone we deal with cases outside the 7 types.

First note that if both $L$ and $R$ are of type $\omega$, \prettyref{proposition:simplecase} already ensures $r(G) \ge \frac{2}{5}D(G)$. If $V_L$ is of type $\lambda$ and $V_R$ is of type $\xi$, we call the surgery type $\lambda$-$\xi$; we call $\omega$-$\omega$ the \emph{unneeded type} of surgery since we don't need to analyze it. It is essential to increase post-surgery fitness when possible. We now establish some values $bonus(\{\lambda,\xi\})$ (which are symmetric in $\lambda$ and $\xi$) such that, after a $\lambda$-$\xi$ surgery, we can increase the fitness by at least $bonus(\{\lambda,\xi\})$.
\begin{packed_enum}
\item We may take $bonus(\{\alpha,\beta\})=bonus(\{\alpha,\beta'\}) = \frac{1}{10}$ because this surgery results in a degree-2 vertex, which may be shortcutted to decrease $D$ by 1 and decrease $r$ by 1/2, giving a $\frac{1}{2}-\frac{2}{5}$ increase in fitness.
\item Similarly we may take $bonus(\{\alpha,\alpha\}) = \frac{2}{10}$.
\item We may take $bonus(\{\omega,\beta\}) = bonus(\{\omega,\beta'\}) = \frac{13}{30}$ as follows. Consider a $\omega$-$\beta$ (or $\beta'$) surgery, so $V_R$ is a singleton $\{v\}$. After surgery $s$ has only one neighbour, $v$, and $v$ has degree at least 3. Then deleting $s$ decreases the diameter by 1 and decreases $r$ by at least $1 - 1/6$. Therefore there is a bonus of at least $1-1/6-2/5 = \frac{13}{30}$.
\item Similarly we can get $bonus(\{\omega,\alpha\}) = 13/30+1/10 = 8/15$ because (w.l.o.g.~in a $\omega$-$\alpha$ surgery) the $\alpha$ vertex's right neighbour has degree at least 3 in the original and post-operation graphs, using \prettyref{proposition:no2}.
\item Finally we can get $bonus(\{\omega,\mu\}) = 1/12$ as follows. Consider a (w.l.o.g.) $\mu$-$\omega$ surgery, where $V_L = \{u, v\}$ and the common neighbour of $u, v$ in $V_{L-1}$ is $w$. Post-surgery, the distance-2 neighbourhood of $s$ is as shown in Figure \ref{fig:distance2}. Add a new vertex and connect it to $u, v, w, s$; it is not hard to argue this preserves planarity. Not counting the increased degree at $w$, we decreased $r$ by $\frac{1}{2}+\frac{2}{3}-\frac{1}{3}-\frac{3}{4}=\frac{1}{12}$ and preserved $D$. (Although this adds a vertex, the surgery theorems later on always delete at least 2 vertices, so overall the total number of vertices always decreases.)
\end{packed_enum}

\subsection{First Analysis of Surgery}
Now we give a lower bound on fitness increase due to surgery. It is convenient to assume when $V_L$ is in cases $\beta', \nu'$ that each node in $V_L$ has \emph{exactly} two neighbours in $V_{L-1}$ --- call the rest \emph{ghost neighbours}. Why is this ok? Keep in mind we want to lower bound the fitness increase from surgery. Due to the ``$\ge2$ neighbours in $V_{L+1}$" condition in these cases, surgery does not increase the degree of nodes in $V_L$. Further, by the convexity of $d(v) \mapsto \frac{1}{d(v)}$, the actual $r$ increase including ghost neighbours will be no more than the ``virtual $r$ increase" ignoring ghost neighbours made by our analysis.

Here are the details. Let $n_L$ denote $|V_L|$ and similarly for $n_R$. Let $o_L$ denote, for each node in $V_L$, the number of ``outside" neighbours such nodes have in $V_{L-1}$; define $o_R$ similarly with $V_{R+1}$ in place of $V_{L-1}$. Thus $n_L$ and $o_L$ depend only on the type of $L$, and abusing notation, we write $n_\omega = n_\alpha = n_\beta = n_{\beta'} = 1, n_\mu = 2, n_\nu = n_{\nu'}= 2$ and  $o_\omega = 0, o_\alpha = 1, o_\beta = o_{\beta'} = 2, o_\mu = 1, o_\nu = o_{\nu'} = 2$. Let $\overline{o}$ denote the number of neighbours each vertex of $V_L$ has in $V_L \cup V_{L-1}$, so $\overline{o} = o + (n-1)$.
Let $w = R-L-1$ denote the number of levels in between, and recall that each of these $w$ levels has at least 3 nodes. Let $x$ denote the number of nodes in the deleted levels, hence we have $x \ge 3w$.

Before surgery, the sum of the degrees of the nodes in $V_{[L, R]}$ is at most $n_Lo_L + 2(3(n_L+x+n_R)-6) + n_Ro_R$ --- the terms count edges from $V_{L-1}$ to $V_L$, in $G_{[L, R]}$, and from $V_R$ to $V_{R+1}$ respectively. We thereby use \prettyref{proposition:amhm} to lower-bound the initial sum of the inverse degrees in $V_{[L, R]}$. Post-surgery, we know the degrees of the nodes in $V_L$ are $\overline{o}_L + n_R$ and similarly for $V_R$. Therefore, if $G'$ indicates the result of applying surgery and bonus operations, we have $\fit(G') - \fit(G) \ge \eqref{eq:mega}$ defined by
\begin{equation}
\frac{(n_L + x + n_R)^2}{n_Lo_L + 2(3(n_L+x+n_R)-6) + n_Ro_R} - \frac{n_L}{\overline{o}_L + n_R}
- \frac{n_R}{\overline{o}_R + n_L} + bonus(L, R) - \frac{2}{5}w.\tag{$\ast$}\label{eq:mega}
\end{equation}
It is easy to verify that $\eqref{eq:mega} > \frac{x}{6} - 4 - \frac{2}{5}w \ge \frac{x}{6} - \frac{2x}{15} - 4$ so it is clearly positive for $x \ge 120$.
In fact the following precise statement is true and gives what we want in almost all needed cases; we also need some $w=0$ cases for later even though they don't make sense in the context provided above.
\begin{claim}
Let $x, w$ be integers with $x \ge 3w, x \ge 2, w \ge 0$. Except for $(w, x) \in \{(1, 3), (2, 6)\}$, the value \eqref{eq:mega} is positive for all types of $L, R$ (except the unneeded $L=R=\omega$).
\end{claim}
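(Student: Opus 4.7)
My plan is to reduce the claim to a finite table of inequalities by a monotonicity argument in $x$ followed by an asymptotic argument in $w$.

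Step 1 (monotonicity in $x$). Since $x$ enters $\eqref{eq:mega}$ only through its first term
\[ f(x) = \frac{(n_L+n_R+x)^2}{n_L o_L + 2(3(n_L+n_R+x)-6) + n_R o_R}, \]
it suffices to compute $f'(x)$, whose numerator is proportional to $(n_L+n_R+x)\bigl(3x + 3(n_L+n_R) - 12 + n_L o_L + n_R o_R\bigr)$. The seven types yield only the five profiles $(n,o) \in \{(1,0),(1,1),(1,2),(2,1),(2,2)\}$, and one checks by direct substitution that the second factor is non-negative for $x \ge 2$ in every pair of profiles (equality only for $\omega$-$\omega$, which is excluded). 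Thus $\eqref{eq:mega}$ is non-decreasing in $x$ for each fixed $w$ and each type pair, so it is enough to prove positivity at $x = \max(2, 3w)$.

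Step 2 (asymptotic in $w$). A short long-division computation gives $f(3w) = w/2 + O(1)$ with the hidden constant bounded uniformly over the finitely many profiles, hence $\eqref{eq:mega}(3w, w) = w/10 + c_{L,R} + O(1/w)$ for absolute constants $c_{L,R}$. An explicit bound on the worst $c_{L,R}$ (accounting for the bonus list) verifies positivity once $w \ge 3$; this refines the already-stated crude bound $\eqref{eq:mega} > x/30 - 4$, which alone only covers $x \ge 121$.

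Step 3 (explicit checks). The remaining cases are $(w,x) \in \{(0,2), (1,3), (2,6)\}$. For each such pair I would tabulate $\eqref{eq:mega}$ over all at most $48$ ordered type combinations (excluding the unneeded $\omega$-$\omega$), plugging in the profile data $(n,o,\overline{o})$ together with the bonus from the preceding subsection's list. A direct computation shows that $(0, 2)$ is uniformly positive, while at $(1, 3)$ and $(2, 6)$ exactly the type pairs involving $\omega$ (for example $\omega$-$\mu$ and $\omega$-$\nu$, whose bonuses are small or zero) dip to non-positive values, matching precisely the exceptional set of the claim.

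The main obstacle is bookkeeping rather than conceptual: the table has many cells, the bonus list treats certain pairs non-uniformly, and the conventions for the ``primed'' types $\beta'$ and $\nu'$ must be applied consistently. Organising the computation by unordered type pair (27 entries) and by the three relevant values of $w$ keeps the arithmetic to small-denominator rationals, and each individual entry is a routine verification.
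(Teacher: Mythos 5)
The paper's own ``proof'' just delegates the verification to a publicly posted Sage worksheet, using the crude bound $\eqref{eq:mega} > x/30 - 4$ to cap the search at $x < 120$, so your idea of structuring the computation through monotonicity in $x$ and an asymptotic in $w$ is a genuinely different and more illuminating route. The monotonicity argument in Step~1 is correct: $f'(x)$ has sign governed by $3x + 3(n_L+n_R) - 12 + n_L o_L + n_R o_R$, which I verified is $\ge 0$ at $x=2$ over all profile pairs with equality only for $\omega$-$\omega$, so $\eqref{eq:mega}$ is non-decreasing in $x$ on the relevant range.

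However, there are two gaps. First, the monotonicity reduction ``check $x = \max(2,3w)$'' only establishes positivity for $x > 3w$ \emph{if the value at $x = 3w$ is itself positive}. Since you find that several type pairs are non-positive at $(1,3)$ and $(2,6)$, nothing you have said covers $(1,x)$ for $x \ge 4$ or $(2,x)$ for $x \ge 7$ for those type pairs; you must add $(1,4)$ and $(2,7)$ to the explicit-check list in Step~3 (I confirmed by hand that both are positive for the offending $\omega$-pairs). Second, the claim in Step~2 that ``$w/10 + c_{L,R}$ verifies positivity once $w \ge 3$'' is false as a lower bound: dropping the positive $O(1/w)$ correction term $C^2/(36(6N+C))$ is too lossy near the boundary. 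For example, for the $\omega$-$\alpha$ pair with profile $(1,0)$-$(1,1)$, the constant works out to $c = 1/3 + 11/36 - 1 - 1/2 + 8/15 = -59/180$, and $3/10 - 59/180 = -5/180 < 0$, whereas the true value of $\eqref{eq:mega}$ at $(w,x)=(3,9)$ is about $+0.033$, saved only by the discarded $O(1/w)$ term. So either keep the exact rational expression $f(3w) = N/6 - C/36 + C^2/(36(6N+C))$ and show it dominates from $w=3$ onward, or move $w=3$ into the finite-check list and restart the asymptotic at $w\ge 4$. With these repairs the strategy is sound and produces a human-checkable proof where the paper relies on a computer.
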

\begin{proof}We use a publicly posted {\tt{Sage}} worksheet~\cite{calculations} to verify the needed cases. (Note we've chosen things so that a $\lambda$-$\xi$ surgery has the same analysis as a $\xi$-$\lambda$ surgery, and such that the pairs $\{\beta, \beta'\}$ and $\{\nu, \nu'\}$ are analyzed in the same way. So our computation involves 14 surgery cases.)\end{proof}
\comment{(Also \eqref{eq:mega} $\ge 0$ for $w=0$ and $x \in \{1, 2\}$ but this is not needed.)
Note also that the $\omega$-$\omega$ case is not needed, due to \prettyref{proposition:simplecase}.}
More generally, the exact same proof gives the following generalization, which is needed later.
\begin{theorem}\label{theorem:gensurg}
Let $V'_R \subseteq V_R$, $L<R$, so that every $s$-$t$ path intersects $V'_R$. Let $X$ be the nodes not connected to $s$ or $t$ in $G \bs V_L \bs V'_R$ and let $x=|X|$. Let $V_L$ be any of the 7 types. Let $V'_R$ be of one of the 7 types, modified so that ``in $V_{R-1}$" is replaced by ``in $X$" and ``in $V_{R+1}$" is replaced by ``in $V_{R+1} \bs X$." Assume that at least one of $L, R$ is not of type $\omega$. Let $w = R - L - 1$. If we delete $X$ and connect $V_L$ to $V'_R$ by a biclique, then perform bonus operations, we get a smaller graph at least as fit as $G$, provided $w\ge0, x\ge2$, $x \ge 3w$ and $(w, x) \not\in \{(1, 3), (2, 6)\}$.
\end{theorem}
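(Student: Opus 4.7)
The plan is to reprise the proof of the preceding Claim with essentially no changes, because the decisive inequality \eqref{eq:mega} was derived from only four ingredients: planarity of $G'$, an upper bound of $w$ on $\Delta D$, a planar-graph bound on the sum of degrees inside the surgery region, and the type-dependent $bonus$. Each of these survives the generalized setting in which $V'_R$ is merely an $s$-$t$ separator inside $V_R$ and $X$ is a general ``middle'' rather than a union of entire levels.

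First I would verify planarity of the surgery result $G'$. Since every $s$-$t$ path of $G$ meets $V'_R$, each component of $G \bs V_L \bs V'_R$ contained in $X$ has all of its neighbors (in $G$) in $V_L \cup V'_R$; hence in any planar embedding of $G$ each such component together with its incident edges occupies a disk-like region whose outer boundary is traced by vertices of $V_L \cup V'_R$. Because $|V_L|, |V'_R| \le 2$, the biclique between $V_L$ and $V'_R$ has at most four edges and can be drawn inside the region vacated by $X$ without crossings, generalizing the successive 2-sum arguments of \prettyref{fact:2sums} and \prettyref{claim:hooray}.

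Next I would verify $\Delta D \le w$. Any $s$-$t$ walk in $G'$ either avoids all biclique edges, in which case it is a walk in $G \bs X \subseteq G$ of length $\ge D > D - w$, or crosses at least one biclique edge, in which case it needs $\ge L$ edges from $s$ to reach $V_L$, one edge across the biclique, and $\ge D - R$ edges from $V'_R$ to reach $t$, totaling $\ge D - w$. For the degree bookkeeping, in $G$ the sum of degrees of vertices in $V_L \cup X \cup V'_R$ is at most $n_L o_L + 2(3(n_L + x + n'_R) - 6) + n'_R o'_R$ by Euler's formula applied to $G[V_L \cup X \cup V'_R]$ together with the modified type definitions (the first and last terms count edges leaving the region on the left and right, respectively). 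Post-surgery each vertex of $V_L$ has degree $\overline{o}_L + n'_R$ and each vertex of $V'_R$ has degree $\overline{o}'_R + n_L$, where $\overline{o}'_R := o'_R + (n'_R - 1)$. Combining via \prettyref{proposition:amhm} yields exactly the lower bound \eqref{eq:mega} on $\fit(G') - \fit(G)$, and the $bonus$ operations depend only on the types of $V_L$ and $V'_R$ so they carry over unchanged.

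Finally, the \texttt{Sage} computation cited in the preceding Claim already establishes that \eqref{eq:mega} is positive in all $14$ surgery cases under the stated hypotheses on $w$ and $x$; \prettyref{claim:useful} then supplies the desired contradiction against the minimality of $G$. The main obstacle is the planarity step, which is the only place where the combinatorial structure of $X$ actually matters: one must confirm that the components of $X$ are genuinely enclosed by $V_L \cup V'_R$ in some planar embedding, and that the four possible biclique edges can simultaneously be inserted inside the vacated region. This requires a little care when $V_L$ or $V'_R$ has size two or contains $s$ or $t$, but it follows cleanly from the hypothesis that $V'_R$ separates $s$ from $t$.
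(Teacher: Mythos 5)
Your proposal matches the paper's own (terse) treatment: the paper simply asserts that ``the exact same proof gives the following generalization,'' and you have correctly identified and re-verified the four ingredients that make the earlier Claim's proof go through (planarity of the surgered graph, the bound $\Delta D \le w$, the degree-sum estimate feeding into \prettyref{proposition:amhm}, and the type-determined bonuses), substituting $n'_R = |V'_R|$ for $n_R$ throughout.

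The one place your write-up is noticeably softer than it should be is the planarity step, which you yourself flag as the ``main obstacle.'' The claim that the components of $X$ sit in a ``disk-like region whose outer boundary is traced by vertices of $V_L \cup V'_R$'' and that the up-to-four biclique edges can always be inserted there is plausible but not actually proved; when $|V_L|=|V'_R|=2$ the separator has four vertices and a planar separator of size four does not in general admit a planar $K_4$ (or even $K_{2,2}$) on it after deleting the separated piece. What saves the argument is partly that $V_L$ is a genuine level (so $G_{\le L}$ can be split off by a $2$-sum as in \prettyref{fact:2sums} and \prettyref{claim:hooray}), and partly that in every invocation of this theorem in the paper $V'_R$ is in fact a singleton, so the biclique has at most two edges and $|V_L\cup V'_R|\le 3$. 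Your proof would be strengthened by stating these restrictions (or by giving the size-$3$ separator argument explicitly) rather than asserting that planarity ``follows cleanly.'' That said, the paper leaves exactly the same gap, so your proposal is not missing anything the paper itself supplies.
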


\subsection{Completing the Cornerstone: The Case $w = 2, x = 6$}\label{sec:funnycase}
If $w = 2, x = 6$ then $R = L+3$ and $|V_{L+1}| = |V_{L+2}| = 3$, since all levels between $V_L$ and $V_R$ have size at least 3. We need:
\begin{claim}\label{claim:a23}
Let $V_i$ be a level of size 2, whose vertices are connected by an edge, and let $j = i+1$ or $j = i-1$, with $|V_j| = 3$. Then the two vertices of $V_i$ do not have three common neighbours in $V_j$.
\end{claim}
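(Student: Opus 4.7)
I'd argue by contradiction. Suppose $V_i=\{u,v\}$ (with $uv\in E$), $V_j=\{a,b,c\}$, and all six cross-edges $ua,ub,uc,va,vb,vc\in E$, and write $H$ for the resulting 7-edge subgraph $K_{2,3}+uv$ on $\{u,v,a,b,c\}$.

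The case $j=i-1$ admits a clean $K_{3,3}$-minor argument. Since $|V_0|=1<3=|V_j|$, we have $i\ge 2$, so $V_{i-2}$ is nonempty and each of $a,b,c$ has a BFS-predecessor there on its shortest $s$-path. The subgraph $G[V_{\le i-2}]$ is connected because every vertex at level $k\le i-2$ reaches $s$ through a shortest path staying in $V_{\le i-2}$. Contracting $G[V_{\le i-2}]$ to a single vertex $Q$ yields a minor of $G$ in which $\{u,v,Q\}$ and $\{a,b,c\}$ form $K_{3,3}$: the cross-edges $ua,\dots,vc$ come from the hypothesis, and $Qa,Qb,Qc$ come from the contracted predecessor edges. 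This contradicts planarity of $G$.

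The case $j=i+1$ is more delicate because $G[V_{\ge i+2}]$ need not be connected. I would fix a planar embedding of $G$; the induced embedding of $H$ then has two triangular faces sharing the edge $uv$ and two quadrilateral faces of the form $u\text{-}x\text{-}v\text{-}z\text{-}u$. Let $F_s$ be the face containing the connected subgraph $G[V_{\le i-1}]$. For any quadrilateral $Q\ne F_s$, any interior vertex has level $\ge i+2$: its shortest $s$-path must enter $Q$ through a boundary vertex of level at most $i+1$, and $V_{i+1}\subset V(H)$ already lies on $\partial H$. Since $u$ and $v$ (at level $i$) have no neighbours of level $\ge i+2$, no $u$-$v$ path crosses the interior of $Q$, so the two diagonal corners of $Q$ lie on a common face of the full graph $G$. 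By edge-maximality (an edge within a single level cannot shrink $D$), the corresponding diagonal edge in $\{ab,ac,bc\}$ already belongs to $E$.

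To finish, I would split on whether $j=D$ or $j<D$. If $j=D$ then $\{a,b,c\}=V_D\ni t$, and a short case analysis on which in-level diagonals are present (combined with the degree-$\ge 3$ bound on vertices other than $t$) shows that some vertex of $\{a,b,c\}$ has degree at most $3$, while $d(u),d(v)\ge 5$ because $u$ and $v$ have four neighbours in $H$ and at least one additional neighbour in $V_{i-1}$; deleting that vertex leaves $D$ unchanged and reduces $r$ enough to raise $\fit$, contradicting maximality via \prettyref{claim:useful}. If $j<D$, then a shortest $s$-$t$ path supplies some $q\in\{a,b,c\}$ with a neighbour in a bridge $B\subset V_{\ge i+2}$; contracting $B$ (possibly augmented through the forced in-level diagonals) to a single branch $X$ and using $\{a,b,c\}$ vs.~$\{u,v,X\}$ should exhibit a $K_{3,3}$-minor of $G$, again contradicting planarity. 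The main obstacle is constructing $X$ adjacent to all three of $a,b,c$ when $V_{\ge i+2}$ is highly disconnected; I expect this to be resolved by case analysis on which subset of $\{a,b,c\}$ each $V_{\ge i+2}$-bridge attaches to, using that the planar embedding forbids any bridge from attaching to the two vertices separated by $uv$ in the $K_{2,3}$-cyclic ordering.
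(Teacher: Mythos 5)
Your $j=i-1$ argument is correct and genuinely different from (and slicker than) the paper's: contracting $G[V_{\le i-2}]$ to a single branch vertex and pairing it with $u,v$ against $a,b,c$ does produce a $K_{3,3}$-minor, since all six hypothesis edges plus the three BFS-predecessor edges are present and the contracted set is connected. The paper instead handles both directions with a single ``3-cut and delete a dangling block'' argument.

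The $j=i+1$ case, however, has a real gap, which you partly acknowledge. The face analysis of $H = K_{2,3}+uv$ is fine (every embedding has two triangular faces and two quadrilateral faces), and the observation that the diagonal of any quadrilateral face $Q\ne F_s$ of $H$ can be added by maximality parallels the paper's derivation of $ab,bc\in E$. But this only forces one or two diagonals (depending on whether $F_s$ is a quadrilateral or a triangle), never all three, so you cannot reach a $K_5$ shortcut. The proposed $K_{3,3}$-minor on $\{u,v,X\}$ vs.\ $\{a,b,c\}$ requires a connected branch set $X\subset V_{\ge i+2}$ adjacent to all three of $a,b,c$, and nothing rules out, say, $b$ and $c$ having no neighbours at level $i+2$ at all. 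The ``forced in-level diagonals'' cannot help build $X$, since $ab,bc$ live inside the opposite side of the bipartition; augmenting $X$ through them would merge branch vertices you need to keep separate. The paper avoids this entirely: from the nesting in a drawing of $G_{\ge i}$ with $uv$ on the outer face, it deduces $ab,bc\in E$, observes that $\{u,b,v\}$ is then a 3-cut whose removal leaves (at least) components around $a$, around $c$, and around $V_{<i}$, deletes whichever of $a$'s or $c$'s components misses $t$ (leaving $D$ unchanged), and shows by a degree calculation (each deleted vertex has degree $\le k+2$ where $k$ is the deleted set's size, and $u,b,v$ retain degree $\ge 3$) that $r$ strictly decreases. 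Your $j=D$ subcase (deleting a low-degree vertex of $V_D$) looks plausible but is a separate ad hoc computation; the main missing piece is the $j=i+1$, $j<D$ situation, where you have not shown how to exhibit any forbidden minor or how to fall back on an $r$-decrease argument.
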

\begin{figure}[h]
\centering
 \subfigure[]{
\includegraphics[scale=0.9]{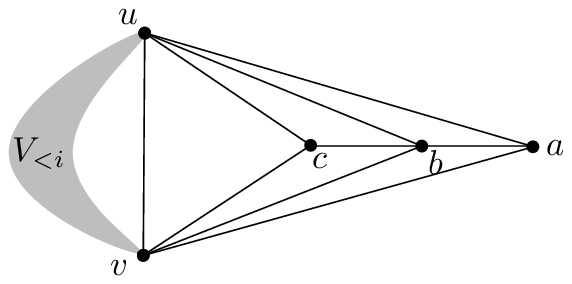}
\label{fig:Claim23} }  \hspace{10mm}
 \subfigure[]{
\includegraphics[scale=0.9]{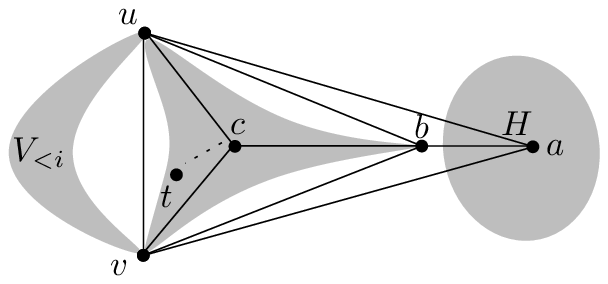}
\label{fig:Claim23-2} }
\caption{(a) If we delete $uvb$ the remainder will have at least 3 connected components. (b) One of these connected components, $H$, does not contain $s$ or $t$; we will delete it.}
\end{figure}
\begin{proof}
The goal of the proof is similar to the result in \prettyref{proposition:articulation}: assume the opposite for the sake of contradiction, then show there is some part of the graph that can be deleted while decreasing $r$ and leaving $D$ unchanged. To do this, we need to establish some structure.

Let $V_i = \{u, v\}$. To simplify the notation we handle the case $j=i+1$ but the proof of the other case is identical. Since $G_{\le i}$ is planar we can draw
it with the edge $uv$ on the outer face. Likewise, draw $G_{\ge i}$ with $uv$ on the outer face. Each vertex of $V_j$ forms a triangle with $uv$
so for some labelling $V_j = \{a, b, c\}$, the drawing of $G_{\ge i}$ has triangle $uva$ containing vertex $b$ and triangle $uvb$ containing vertex $c$, as pictured in
Figure \ref{fig:Claim23}.

We claim by maximality $ab$ is an edge of $G$: indeed, since $u$ has no neighbours other than $v, a, b, c$ in
 the drawing of $G_{\ge i}$, if $ab$ is not present we can add it in a planar way by going next to the path $aub$. Similarly $bc \in E(G)$.

Now note that $G \bs \{u, b, v\}$ has at least 3 components: one containing $a$, one containing $c$, and one containing
$V_{< i}$. One of the first two does \emph{not} contain $t$. Assume the first (the second case is analogous): denote the component
 containing $a$ in $G \bs \{u, b, v\}$ by $H$, so $H \not\ni t$ (see Figure \ref{fig:Claim23-2}). It's not hard to see any shortest $s$-$t$ path avoids $H$, hence $D(G \bs H) = D(G)$.
Moreover we claim $r(G \bs H) < r(G)$, contradicting our choice of $G$. To see this, let $k$ denote $|V(H)|$, note that each vertex in $H$ has degree at most
$k+2$, and that we drop the degrees of $u, b, v$ by at most $k$, thus
\begin{align*}r(G) - r(G \bs H) &\ge  k\frac{1}{k+2} + \sum_{i \in \{u, b, v\}} \frac{1}{\deg_G(i)} - \frac{1}{\deg_{G\bs H}(i)}
\\&\ge
\frac{k}{k+2} + \sum_{i \in \{u, b, v\}} \frac{1}{\deg_{G\bs H}(i)+k} - \frac{1}{\deg_{G\bs H}(i)} \\ &\ge \frac{k}{k+2} + 3(1/(k+3)-1/3) = \frac{k}{(k+2)(k+3)} > 0\end{align*}
where in the second-to-last inequality we used the fact that $\deg_{G\bs H}(i) \ge 3$ and $\frac{1}{\cdot}$ is convex.
\end{proof}
This allows us to bound the number of edges between a level $\{u, v\}$ with $uv \in E$ and an adjacent level of size 3: there are at most 5. It's also obvious that between a singleton level and an adjacent level of size 3, there are at most 3 edges. Accordingly, let $z_L$ be 3 (resp.~5) when $n_L$ is 1 (resp.~2) and similarly define $z_R$.
In the situation that there are exactly two levels, each of size-3, between $V_L$ and $V_R$,
we can replace the quantity $\eqref{eq:mega}$ from the previous section by grouping the vertices in a different way; specifically we have $\fit(G') - \fit(G) \ge \eqref{eq:mega2}$ with \eqref{eq:mega2} defined by
\begin{equation}
\frac{n_L^2}{n_L\overline{o}_L + z_L} +
\frac{x^2}{z_L + 2(3x-6) + z_R} + \frac{n_R^2}{n_R\overline{o}_R + z_R} - \frac{n_L}{\overline{o}_L + n_R}
- \frac{n_R}{\overline{o}_R + n_L} + bonus(L, R) - \frac{2}{5}w.\label{eq:mega2}\tag{\maltese}
\end{equation}
Specifically, the first three terms lower-bound the contribution to $r(G)$ by vertices in $V_L$, in $V_{L+1} \cup V_{L+2}$, and $V_{L+3}=V_R$ respectively.
\begin{claim}
The quantity \eqref{eq:mega2} is positive when $w=2, x=6$ for all types of $L, R$ (except the unneeded type $L=R=\omega$).
\end{claim}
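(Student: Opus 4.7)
The plan is to mirror the proof of the analogous claim for \eqref{eq:mega}: enumerate the admissible $L$-type/$R$-type pairs, substitute the parameters associated to each type into \eqref{eq:mega2} with $w=2, x=6$, and verify positivity by exact rational arithmetic (most naturally by extending the Sage worksheet cited at \cite{calculations}).

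First I would tabulate, for each $\lambda \in \{\omega,\alpha,\beta,\beta',\mu,\nu,\nu'\}$, the values $(n_\lambda,\overline{o}_\lambda,z_\lambda)$ and the pairwise $bonus(\{\lambda,\xi\})$ values established earlier in the section. The expression \eqref{eq:mega2} depends on each type only through $(n_\lambda,\overline{o}_\lambda,z_\lambda)$, so $\{\beta,\beta'\}$ collapse into one case and $\{\nu,\nu'\}$ likewise. Combined with the manifest symmetry of \eqref{eq:mega2} under swapping $L\leftrightarrow R$, this reduces the problem to the same 14 unordered pairs that arose in the Claim about \eqref{eq:mega}, with $L=R=\omega$ again removed as unneeded.

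Next I would plug $w=2, x=6$ into \eqref{eq:mega2}, so that the middle term becomes $36/(z_L+z_R+24)$ and the final term becomes $-4/5$, and evaluate the resulting rational number for each of the 14 cases. Each evaluation is a short arithmetic calculation; writing it as a Sage loop over the 14 parameter tuples makes it a one-line extension of the existing worksheet.

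The main obstacle I anticipate is that the inequality is genuinely tight: a quick check of, say, the $\omega$-$\alpha$, $\omega$-$\beta$, and $\omega$-$\mu$ cases gives values of only a few parts per hundred (e.g.\ $\omega$-$\mu$ works out to $\tfrac{7}{360}$, and each of these three cases requires its specific bonus or it fails). This is exactly why the cornerstone argument cannot simply reuse \eqref{eq:mega} for $(w,x)=(2,6)$, but must refine it to \eqref{eq:mega2} by splitting $V_L$, the two middle levels, and $V_R$ into three \prettyref{proposition:amhm}-groups using the sharper edge-count bounds $z_L, z_R$ provided by \prettyref{claim:a23}. Because the slack is so small, exact rational arithmetic (rather than floating point) is needed to make the verification rigorous, and the bonuses cannot be weakened anywhere in the table.
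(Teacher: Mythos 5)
Your proposal is correct and takes essentially the same route as the paper: the paper's own proof is simply a citation to the same Sage worksheet, verifying \eqref{eq:mega2} at $(w,x)=(2,6)$ over the 14 reduced type-pairs by exact rational arithmetic. Your spot-checks are accurate (e.g.\ $\omega$-$\mu$ does evaluate to $\tfrac{7}{360}$, $\omega$-$\alpha$ to $\tfrac{1}{60}$, $\omega$-$\beta$ to $\tfrac{1}{30}$), which confirms both the tightness observation and the necessity of the bonus terms.
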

\begin{proof}This calculation is also done via computer at \cite{calculations}.\end{proof}
\begin{corollary}\label{corollary:othercase}
Let $V_L$ and $V_R$ be levels of one of the 7 types (except the unneeded type $L=R=\omega$), with $R = L + 3$ and $|V_{L+1}| = |V_{L+2}| = 3$.
Applying surgery at $V_L$ and $V_R$ gives a smaller which is smaller and more fit than $G$.
\end{corollary}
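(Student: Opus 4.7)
The plan is to parallel the derivation of $\fit(G')-\fit(G) \ge \eqref{eq:mega}$ from the preceding first analysis of surgery, but using the region-by-region grouping that yields the sharper formula $\eqref{eq:mega2}$. I would let $G'$ denote the graph obtained from $G$ by applying surgery at $V_L$ and $V_R$, followed by the bonus operation associated with the type pair $(L,R)$. Since $|V_{L+1}|+|V_{L+2}| = 6$ vertices are removed and the only bonus that adds a vertex (the $\omega$-$\mu$ bonus) adds just one, we immediately get $|V(G')| \le |V(G)| - 5$, so $G'$ is strictly smaller than $G$; it then remains to show $\fit(G') > \fit(G)$, after which \prettyref{claim:useful} applies.

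For the fitness estimate, I would split the vertices of the surgery window $V_{[L,R]}$ into the three pieces $V_L$, $V_{L+1}\cup V_{L+2}$, and $V_R$, applying \prettyref{proposition:amhm} to each separately (which is the whole point of using $\eqref{eq:mega2}$ instead of $\eqref{eq:mega}$). For $V_L$: each vertex has $\overline{o}_L$ neighbours in $V_L \cup V_{L-1}$ (after the virtual ghost-neighbour deletion in cases $\beta',\nu'$), and the total number of edges from $V_L$ to the adjacent size-$3$ level $V_{L+1}$ is at most $z_L\in\{3,5\}$, precisely by \prettyref{claim:a23}. The resulting AM-HM bound gives the first summand $n_L^2/(n_L\overline{o}_L+z_L)$ of $\eqref{eq:mega2}$; a symmetric argument gives the $V_R$ summand. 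For the middle piece, Euler's bound says the $x=6$ vertices induce at most $3\cdot 6-6=12$ edges, so their degree sum in $G$ is at most $z_L+2\cdot 12+z_R$, yielding the middle summand by AM-HM.

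Post-surgery, the new biclique between $V_L$ and $V_R$ makes each vertex of $V_L$ have degree exactly $\overline{o}_L+n_R$ and each vertex of $V_R$ have degree exactly $\overline{o}_R+n_L$, contributing the two subtracted terms of $\eqref{eq:mega2}$. The diameter drops by $w=2$, accounting for $-\tfrac{2}{5}w$, and the bonus operation contributes at least $bonus(L,R)$ by the discussion preceding \prettyref{theorem:gensurg}. The ghost-neighbour convention in $\beta',\nu'$ is justified by the same convexity argument used earlier in the paper: the virtual analysis underestimates the true fitness gain, so the inequality $\fit(G')-\fit(G) \ge \eqref{eq:mega2}$ remains valid. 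The preceding claim guarantees $\eqref{eq:mega2}>0$ for $w=2,x=6$ across all $14$ relevant type pairs (the $\omega$-$\omega$ case already being handled by \prettyref{proposition:simplecase}), so $\fit(G') > \fit(G)$ as required.

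The main obstacle I expect is bookkeeping rather than new mathematical content: correctly instantiating $n$, $\overline{o}$, $z$, and $bonus$ in each of the $14$ surgery cases, and carefully handling the boundary cases where $L$ or $R$ equals $\omega$ (so that $V_{L-1}$ or $V_{R+1}$ is absent and the corresponding $\overline{o}$ summand of $\eqref{eq:mega2}$ must be reinterpreted). The positivity verification of $\eqref{eq:mega2}$ itself is routine algebra and, exactly as in the preceding claim, should be delegated to a computer check rather than hand-computed.
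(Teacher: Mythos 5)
Your derivation matches the paper's own (largely implicit) proof: you reconstruct \eqref{eq:mega2} by the same three-piece AM--HM grouping of $V_{[L,R]}$ into $V_L$, $V_{L+1}\cup V_{L+2}$, and $V_R$, use \prettyref{claim:a23} to obtain $z_L,z_R$, invoke the ghost-neighbour convention for $\beta',\nu'$ in the same way, and defer positivity of \eqref{eq:mega2} at $(w,x)=(2,6)$ to the same computer check. This is the intended argument; the only superfluous remark is the aside that \prettyref{proposition:simplecase} handles $\omega$-$\omega$, which is unnecessary since the corollary's hypotheses already exclude that pair.
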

Together with \prettyref{theorem:gensurg} this gives the heart of our proof:
\begin{theorem}[Cornerstone]\label{theorem:cornerstone}
Let $V_L, V_R$ be levels of size $\le 2$, with all levels between them of size $\ge 3$. If $V_L$ and $V_R$ are each one of the $7$ types, and there are at least $4$ nodes between them, this contradicts our choice of $G$.
\end{theorem}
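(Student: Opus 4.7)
The plan is to observe that the Cornerstone follows by combining \prettyref{theorem:gensurg}, \prettyref{corollary:othercase}, and \prettyref{proposition:simplecase} via a short case analysis on the pair $(w,x)$, where $w = R-L-1$ is the number of intermediate levels and $x = |V_{[L+1,R-1]}|$ is the total number of nodes strictly between $V_L$ and $V_R$. The hypothesis that each intermediate level has size at least $3$ forces $x \ge 3w$, and the hypothesis that at least $4$ nodes lie between $V_L$ and $V_R$ gives $x \ge 4$; in particular $w \ge 1$.

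First I would dispose of the \emph{unneeded type} $L = R = \omega$, which is explicitly excluded from both \prettyref{theorem:gensurg} and \prettyref{corollary:othercase}. Here $L = 0$ and $R = D$, so every level except possibly $V_0$ and $V_D$ has size at least $3$, giving $n \ge 3(D-1)+2$. \prettyref{proposition:simplecase} then yields $r(G) \ge \tfrac{2}{5}D(G)$, i.e.\ $\fit(G) \le 0$, contradicting the fact that $G$ was chosen with positive fitness.

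From now on I assume at least one of $L,R$ is not of type $\omega$, so \prettyref{theorem:gensurg} applies in principle. The pair $(w,x) = (1,3)$ is ruled out by $x \ge 4$. If $(w,x) \ne (2,6)$ as well, I would invoke \prettyref{theorem:gensurg} with $V'_R = V_R$ and $X = V_{[L+1,R-1]}$: it is immediate from the level structure that this $X$ equals the set of vertices disconnected from $\{s,t\}$ after removing $V_L \cup V_R$, and every $s$--$t$ path meets $V'_R = V_R$ since $V_R$ is a level whose index $R$ lies strictly between $0$ and $D$ (or equals one of them, in which case the relevant endpoint is absorbed into $V'_R$ anyway). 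The theorem then produces a smaller graph at least as fit as $G$, contradicting our choice of $G$. The remaining case $(w,x) = (2,6)$ forces $R = L+3$ and $|V_{L+1}| = |V_{L+2}| = 3$, which is exactly the hypothesis of \prettyref{corollary:othercase}; invoking that corollary yields the same contradiction.

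I expect the main obstacle, albeit a minor one, to be purely bookkeeping: ensuring the $\omega$-$\omega$ subcase is genuinely covered by \prettyref{proposition:simplecase} rather than overlooked, and verifying that the abstract hypotheses of \prettyref{theorem:gensurg} (in particular the identification of $X$ with the intermediate levels, and the $s$--$t$ path condition on $V'_R$) hold in the straightforward level-decomposition instantiation $V'_R = V_R$. Once these are checked, the proof collapses to a table lookup against the excluded pairs $\{(1,3),(2,6)\}$ and to citing the two surgery results already established.
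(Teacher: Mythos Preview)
Your proposal is correct and matches the paper's own argument: the Cornerstone is stated in the paper immediately after \prettyref{corollary:othercase} with the one-line justification ``Together with \prettyref{theorem:gensurg} this gives the heart of our proof,'' and you have simply unpacked that sentence into the obvious case split on $(w,x)$, handling $\omega$-$\omega$ via \prettyref{proposition:simplecase} exactly as the paper notes earlier. The only addition you supply is the routine verification that, with $V'_R=V_R$ and $X=V_{[L+1,R-1]}$, the abstract hypotheses of \prettyref{theorem:gensurg} specialize to the ordinary level-wise ones, which the paper leaves implicit.
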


\subsection{Sufficiency of the 7 Cases}
The structure we want to establish in $G$ is that every level has size at most 3, and that two size-3 levels are never adjacent. We now show how to get from the cornerstone (\prettyref{theorem:cornerstone}) to this structure. We start with a general observation (which motivated our definition of the 7 cases).

\begin{claim}\label{claim:maxy}
Suppose $V_i = \{u, v\}$ and $uv \in E$. Suppose $j = i \pm 1$, that $u$ has 1 or fewer neighbours in $V_j$, and that $v$ has at least one neighbour in $V_j$ which is not a neighbour of $u$. Then this violates maximality.
\end{claim}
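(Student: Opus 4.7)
The plan is to exhibit an edge not in $E(G)$ that can be planarly added to $G$ without changing the diameter, contradicting the maximality of $G$. Since every candidate edge will have the form $uw$ with $w \in N(v) \subseteq V_{i-1} \cup V_i \cup V_{i+1}$, adding it cannot alter any BFS distance from $s$ and hence leaves $D(G)$ unchanged; only planarity must be verified.

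Fix a planar embedding of $G$ and let $n_0 = u, n_1, \ldots, n_{d-1}$ be the cyclic order of $v$'s neighbours. The faces $F_0$ (between $vn_0$ and $vn_1$) and $F_{d-1}$ (between $vn_{d-1}$ and $vn_0$) both contain $u$ on their boundaries via edge $vu$, and additionally contain $n_1$ and $n_{d-1}$ respectively. If either $un_1$ or $un_{d-1}$ is missing from $E(G)$, we insert the missing edge through the corresponding face and conclude. Otherwise $\{n_1, n_{d-1}\} \subseteq N(u)$; since $u$ has at most one $V_j$-neighbour, at least one of them (say $n_{d-1}$) must lie in $V_{i-1}$, and $n_1$ lies either in $V_{i-1}$ or equals the unique $V_j$-neighbour $u'$ of $u$.

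The main obstacle is the residual case: the hypothesis supplies some $y = n_k \in V_j \cap N(v) \setminus N(u)$ with $1 < k < d-1$, and we must show $G + uy$ is planar. My plan is a re-embedding argument: exploit that $v$'s rotation in a planar embedding is not canonical and modify the drawing -- by flipping 2-connected sub-blocks attached to $v$ or separated by a pair through $v$ -- until $vy$ becomes cyclically adjacent to $vu$. Because $u$ has at most one neighbour in $V_j$, the $V_j$-vertices around $v$ cannot be combinatorially trapped in a way that forbids this swap. Once $vy$ and $vu$ are consecutive at $v$, $u$ and $y$ share the intervening face and $uy$ can be planarly added. Rigorous treatment of this step -- understanding the block structure at $v$ and how \prettyref{proposition:articulation} restricts articulation vertices to lie on an $s$-$t$ path -- is the crux of the proof; an alternative is a direct chain-tracing argument that walks along $\partial F_0$, peeling triangular faces $uvn_l$ whenever $n_l \in N(u)$, and shows the walk must eventually reach a $V_j$-vertex outside $N(u)$ before closing.
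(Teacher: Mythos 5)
The proposal identifies the correct form of contradiction (planarly add an edge $uw$, $w \in N(v)$, between adjacent BFS levels, preserving $D$), but it has a genuine gap. You work in a fixed embedding of the \emph{whole} graph $G$, look at the two edges of $v$'s rotation flanking $vu$, and then reach a residual case (both flanking vertices $n_1, n_{d-1}$ are already adjacent to $u$, with the target $y\in V_j\cap N(v)\setminus N(u)$ stuck strictly between them in the rotation). You explicitly flag this as ``the crux'' and sketch a ``re-embedding by flipping sub-blocks'' that you do not carry out; it is not clear it can be carried out, since Whitney flips need not allow you to bring an arbitrary $vy$ into rotation-adjacency with $vu$, especially when $v$ is interior to a 3-connected piece with many $V_{i-1}$-neighbours cluttering its rotation. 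The ``chain-tracing'' alternative is equally unsubstantiated.

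The idea the proposal is missing --- and the one that makes the paper's proof short --- is to \emph{not} embed all of $G$, but only $G_{\ge i}$ (taking $j=i+1$ w.l.o.g.), with the edge $uv$ on its outer face. This is licensed by \prettyref{fact:2sums}: since $\{u,v\}$ separates $G_{<i}$ from $G_{>i}$ and $uv\in E$, $G$ is the 2-sum of $G_{\le i}$ and $G_{\ge i}$, so an edge planarly addable to $G_{\ge i}$ (drawn with $uv$ outside) is planarly addable to $G$. In $G_{\ge i}$, \emph{every} neighbour of $v$ other than $u$ lies in $V_{i+1}$, so the $V_{i-1}$-neighbours that block your argument simply aren't present. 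Then: if $u$ has no $V_{i+1}$-neighbour, $u$ is a leaf of $G_{\ge i}$ on the outer face together with some $V_{i+1}$-neighbour $y$ of $v$, and $uy$ can be drawn; if $u$ has exactly one $V_{i+1}$-neighbour $x$, then at least one of the two edges flanking $vu$ in $v$'s rotation goes to some $y\in V_{i+1}$ with $y\ne x$ (otherwise $v$'s only $V_{i+1}$-neighbour is $x$, contradicting the hypothesis), the path $uvy$ bounds a face, and $uy$ can be drawn. Restricting to $G_{\ge i}$ is precisely what dissolves the case your proposal cannot finish.
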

\begin{proof}
Take $j=i+1$, the other case is analogous. Embed $G_{\ge i}$ with $uv$ on the outer face. First if $u$ has no neighbours in $V_{i+1}$ then note $u$ and a neighbour of $v$ are on the outer face, hence we can add an edge between them without violating planarity in $G_{\ge i}$ (and hence without violating planarity in $G$, by \prettyref{fact:2sums}). Second, suppose $u$ has exactly one neighbour $x$ in $V_{i+1}$; at least one endpoint emanating from $v$ adjacently to $vu$ is of the form $vy$ with $y \neq u, v, x$; then the path $uvy$ lies on a face and the edge $uy$ can be added without violating planarity.
\end{proof}

In the remainder of the section, we ensure all size-2 levels are connected, show that $V_L$ always is in one of the 7 cases, deal with $V_R$'s that fall outside the 7 cases, and then show the last level $V_D$ has size 1.

\begin{claim}\label{claim:level2con}
Any level of size $2$ is connected, except possibly for the last level $V_D$.
\end{claim}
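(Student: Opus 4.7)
Suppose for contradiction $V_i = \{u, v\}$ with $0 < i < D$ and $uv \notin E$. Since $u$ and $v$ share a level, adding $uv$ does not affect $D$, so by our maximality assumption $G \cup \{uv\}$ must be non-planar. The plan is to contradict this by constructing a planar embedding of $G \cup \{uv\}$ along the lines of \prettyref{claim:hooray}: find $u$-$v$ paths $P_L, P_R$ whose internal vertices lie in $V_{<i}$ and $V_{>i}$ respectively, then \prettyref{fact:2sums} applied to the 2-sum of $G_{\le i} \cup \{uv\}$ and $G_{\ge i} \cup \{uv\}$ along $uv$ yields $G \cup \{uv\}$ planar. The path $P_L$ exists trivially via $s$.

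For $P_R$, I would consider the components of $G[V_{\ge i}]$. Since every vertex of $V_{i+1}$ has a neighbor in $V_i = \{u,v\}$, each component contains $u$ or $v$. If they share a component then $P_R$ exists and we are done. Otherwise let them be $C_u, C_v$ with WLOG $t \in C_v$. If $C_u \supsetneq \{u\}$, then $u$ is an articulation vertex separating $C_u \bs \{u\}$ from the rest; yet $v$ (and hence $C_v \ni t$) is reachable from $s$ in $G \bs u$ through $V_{i-1}$, contradicting \prettyref{proposition:articulation}. The remaining ``dead-end'' case is $C_u = \{u\}$, where $u$'s neighbors all lie in $V_{i-1}$; this forces $i \ge 2$ (else $u$ has $s$ as sole neighbour, forcing $d(u) = 1$), and \prettyref{proposition:no2} then ensures $d(u), d(a) \ge 3$ for every $a \in N(u) \subseteq V_{i-1}$.

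In this dead-end case I would abandon the planarity route and instead invoke \prettyref{claim:useful} on $G' = G \bs u$, with $\Delta D = 0$ since $u$ lies on no shortest $s$-$t$ path; this reduces to showing $r(G \bs u) \le r(G)$, equivalently $1/d(u) \ge \sum_{a \in N(u)} 1/(d(a)(d(a)-1))$. The baseline degree bounds are insufficient for large $d(u)$, so I would tighten via maximality: any boundary vertex $x$ of a face of $G$ incident to $u$ with $x \in V_{i-3} \cup \cdots \cup V_{i+1}$ and $x \notin N(u) \cup \{u\}$ would allow adding $ux$ planarly without decreasing $D$, contradicting maximality. This constrains each face incident to $u$ to be triangular, so $u$ is the hub of a wheel on neighbours $a_1, \dots, a_k$; each $a_j$ additionally needs a neighbour in $V_{i-2}$ to justify being in $V_{i-1}$, upgrading the bound to $d(a_j) \ge 4$. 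The principal obstacle is the final accounting: for $k=3$ the inequality $1/3 \ge 3/12$ closes the argument immediately, but for $k \ge 4$ a bare deletion of $u$ may leave $r$ increased, and I expect to supplement the deletion with the planar addition of a carefully chosen chord $a_j a_{j'}$ across the face vacated by $u$ (restoring two former neighbours' degrees while still saving the full $1/k$ contribution of $u$). Closing the remaining cases should then follow by a brief sub-analysis of the existing edges among $a_1, \dots, a_k$ and their degrees.
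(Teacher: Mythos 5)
Your proposal diverges from the paper at the dead-end case, and that divergence is where the gap lies. The first part (both $u,v$ reachable in $G_{\ge i}$ implies $uv$ can be added via the $2$-sum of \prettyref{fact:2sums}) and the second part (if $C_u \supsetneq \{u\}$ then $u$ is an articulation vertex contradicting \prettyref{proposition:articulation}) mirror the paper's argument closely, though with the roles of $u$ and $v$ swapped. The paper then handles the dead-end ($u$ with $N(u)\subseteq V_{i-1}$) by observing $|V_{i-1}|\ge 3$, locating the nearest left level $V_L$ of size $\le 2$, and invoking the surgery machinery \prettyref{theorem:gensurg} to excise $u$ together with the intervening size-$\ge3$ levels; this reuses the computer-verified fitness accounting. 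You instead try to delete $u$ directly and bound the change in $r$ by hand.

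The dead-end accounting in your proposal is where the genuine gap sits. Your wheel argument gives $d(a_j)\ge 4$, but then you need $1/d(u) \ge \sum_{a\in N(u)} 1/(d(a)(d(a)-1))$, which with $d(a)\ge 4$ reads $1/k \ge k/12$, i.e.\ $k\le 3$. One fresh chord only rescues $k=4$ (it reduces the right side to $(k-2)/12$, giving $k(k-2)\le 12$). For $k\ge 5$ you would need a full triangulation of the $k$-gon vacated by $u$ with chords that are \emph{not} already present, and it is not true in general that such a triangulation exists: the pattern of existing diagonals can force any non-crossing triangulation to reuse a present edge. When that happens, the saving degenerates to a single fresh chord, and the crude inequality fails again; one must then bring in the improved degree bounds that the very same pre-existing diagonals induce on the $a_j$'s, which is a further case split rather than the ``brief sub-analysis'' you gesture at. In addition, the step asserting that adding $ux$ for a face-boundary vertex $x$ does not decrease $D$ is stated but not justified; it requires specifically that $u$ is a dead end (so going through $u$ cannot shorten $dist(s,t)$) together with a level bound on $x$, and the level-$V_{i-3}$ threshold you cite is exactly where the triangle-inequality estimate becomes tight, so that verification cannot be waved away. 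None of these issues are obviously fatal, but they do constitute real missing content in the central case, whereas the paper's route delegates all of this to the already-established and computer-checked \prettyref{theorem:gensurg}.
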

\begin{proof}
Let $V_R$ be minimal, $R < D$, such that $V_R = \{u, v\}$ is of size 2 and $uv$ is not an edge. If both $u$ and $v$ are connected to $t$ in $G_{\ge R}$ then using the proof method of \prettyref{claim:hooray}, $uv$ can be added without violating planarity, which contradicts maximality. Therefore assume only $u$ has a path to $t$ in $G_{\ge R}$. It now follows that $v$ is an isolated vertex in $G_{\ge R}$,
or else \prettyref{proposition:articulation} is violated because of the articulation point $v$.

Since $v$ has degree at least 3 (by Proposition \ref{proposition:no2}) and these neighbours are only in $V_{R-1}$, it follows that $|V_{R-1}| \ge 3$. Let $L$ be maximal with $L < R$ such that $|V_L| \le 2$. By our choice of $R$, we see $V_L$ is connected if it has size 2. Moreover, each vertex in $V_L$ has at least two neighbours in $V_{L+1}$, using $|V_{L+1}| \ge 3$ and \prettyref{claim:maxy}. So $V_L$ is of one of the 7 cases.

Now look at $u$. If $u$ has 2 or more neighbours in $V_{R-1}$, we can use surgery at $V_L$ and $u$ which is of type $\beta'$ (\prettyref{theorem:gensurg}: cutting out $R-L-1 \ge 1$ levels of size 3, plus $v$). Otherwise, we can use surgery at $V_L$ and the unique neighbour of $u$ in $V_{R-1}$, which is an articulation vertex of type $\alpha$ (\prettyref{theorem:gensurg}: cutting out $R-L-2 \ge 0$ levels of size 3, plus $v$ and at least two nodes from $V_{R-1}$).\end{proof}

The following corollary follows from the previous proof and induction:
\begin{corollary}\label{corollary:leftok}
Every level $V_L$ such that $|V_L| \le 2, |V_{L+1}| \ge 3$ falls in one of the 7 cases.
\end{corollary}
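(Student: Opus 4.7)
The plan is a direct case analysis on $|V_L|$ and $L$, using \prettyref{claim:level2con} (which has already globally established that every size-2 level below $V_D$ is connected) and \prettyref{claim:maxy} in both the $V_{L-1}$ and $V_{L+1}$ directions to count inter-level neighbours. The ``induction'' alluded to in the statement is essentially that global result: once it is known that every non-terminal size-2 level is connected, the local reasoning below applies uniformly to each eligible $V_L$, without needing to re-run the minimality argument used in \prettyref{claim:level2con}'s proof.

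First I dispose of $|V_L|=1$. If $L=0$ this is type $\omega$; otherwise write $V_L=\{v\}$. Since every vertex of $V_{L+1}$ must have a neighbour in $V_L=\{v\}$ by BFS, $v$ has at least $|V_{L+1}|\ge 3$ neighbours in $V_{L+1}$. I then split on the number of neighbours of $v$ in $V_{L-1}$: exactly one yields type $\alpha$; two or more, together with the $\ge 2$ neighbours already established on the right, yields type $\beta'$.

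Next I handle $|V_L|=2$, say $V_L=\{u,v\}$. Since $|V_{L+1}|\ge 3$ we have $L<D$, so by \prettyref{claim:level2con} the edge $uv$ is present. Applying \prettyref{claim:maxy} with $j=L+1$: if $u$ had at most one neighbour in $V_{L+1}$, then because each of the $\ge 3$ vertices of $V_{L+1}$ needs a neighbour in $V_L$, $v$ would have a neighbour in $V_{L+1}$ not shared with $u$, violating maximality; so both $u$ and $v$ have $\ge 2$ neighbours in $V_{L+1}$. Now apply \prettyref{claim:maxy} in the other direction with $j=L-1$: if $u$ has only one neighbour $p$ in $V_{L-1}$, then $v$'s neighbours in $V_{L-1}$ must be contained in $\{p\}$ (else maximality fails), and since $v$ needs at least one, $v$'s sole neighbour in $V_{L-1}$ is also $p$, giving type $\mu$. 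Otherwise both $u,v$ have $\ge 2$ neighbours in $V_{L-1}$, which together with the bound on the right yields type $\nu'$.

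The main obstacle, slight as it is, is to make sure \prettyref{claim:maxy} really does apply in the backward direction $j=L-1$ as well as the forward direction — and it does, since the claim is symmetric in $j=i\pm 1$ — and then to verify that the sub-cases enumerated above really exhaust the 7 types. No additional surgery or edge-counting is required.
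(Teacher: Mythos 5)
Your proof is correct and takes essentially the same route the paper intends: since \prettyref{claim:level2con} already guarantees every non-terminal size-2 level is connected (and $|V_{L+1}|\ge 3$ forces $L<D$), the two-directional application of \prettyref{claim:maxy} together with the trivial neighbour-count when $|V_L|=1$ yields exactly the case split into $\omega$, $\alpha$, $\beta/\beta'$, $\mu$, $\nu'$. The only thing worth making fully explicit is the ``otherwise'' branch for $|V_L|=2$: you should note that if $u$ has $\ge 2$ neighbours in $V_{L-1}$ then $v$ must as well, by applying \prettyref{claim:maxy} with the roles of $u$ and $v$ swapped — otherwise $u$ would supply a neighbour unshared with $v$.
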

\comment{\begin{proof}
Given the result of \prettyref{corollary:leftok}, this is just repeating the combination of \prettyref{claim:maxy} and $|V_{L+1}| \ge 3$ from the proof of \prettyref{corollary:leftok}.
\end{proof}}
\begin{proposition}
Let $V_R$, $R < D$, be such that $|V_R| \le 2$, and either $|V_{R-1}| \ge 4$, or both $|V_{R-2}|, |V_{R-1}| \ge 3$. Then we can perform surgery to increase the fitness of $G$.
\end{proposition}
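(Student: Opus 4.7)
My plan is to reduce the statement to the cornerstone \prettyref{theorem:cornerstone} (or to its supporting tools \prettyref{theorem:gensurg} and \prettyref{corollary:othercase}) by choosing $L$ carefully. I would take $L$ to be the largest index with $L < R$ and $|V_L| \le 2$; such $L$ exists since $L = 0$ is always eligible. By maximality of $L$, every intermediate level $V_{L+1}, \dots, V_{R-1}$ has size at least $3$, so \prettyref{corollary:leftok} immediately yields that $V_L$ is of one of the $7$ types.

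Next I would verify the quantitative conditions on $w = R - L - 1$ and $x = \sum_{i=L+1}^{R-1} |V_i|$. In Case 1 ($|V_{R-1}| \ge 4$), maximality of $L$ forces $L \le R-2$, so $w \ge 1$ and $x \ge 3(w-1) + 4 = 3w+1$; in particular $(w,x) \notin \{(1,3),(2,6)\}$, and \prettyref{theorem:gensurg} applies directly. In Case 2 ($|V_{R-2}|, |V_{R-1}| \ge 3$) we have $L \le R-3$, hence $w \ge 2$ and $x \ge 3w$; if $(w,x) = (2,6)$ then $R = L+3$ with both intermediate levels of size exactly $3$, so \prettyref{corollary:othercase} applies, and otherwise \prettyref{theorem:gensurg} applies.

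The remaining requirement is that $V_R$ (or some substitute $V'_R$) fits one of the $7$ right-hand-side types. Using \prettyref{claim:level2con} to obtain connectivity when $|V_R|=2$, together with \prettyref{claim:maxy} applied to $V_R$ against $V_{R\pm 1}$, a short case analysis on $|V_R| \in \{1,2\}$ shows that $V_R$ itself is of a right-side type unless its vertex (or vertices) share a single common neighbour $w \in V_{R-1}$ while having $\ge 3$ neighbours in $V_{R+1}$. In this exceptional sub-case, $w$ is an articulation vertex by \prettyref{proposition:articulation}, and I would instead apply \prettyref{theorem:gensurg} at level $R-1$ with $V'_R = \{w\}$: the vertex $w$ has exactly $|V_R|$ neighbours in $V_R \bs X$ (both members of $V_R$ remain connected to $t$), so $w$ satisfies the modified type $\alpha$ (if $|V_R|=1$) or $\beta$ (if $|V_R|=2$). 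The adjusted deletion set $X$ additionally absorbs $V_{R-1} \bs \{w\}$, contributing at least $2$ extra vertices, so $x' \ge 3w' + 2$ and the exceptional pairs $(1,3),(2,6)$ remain avoided.

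The main obstacle will be the meticulous case analysis verifying that $V_R$ (or its articulation-vertex substitute $\{w\}$) falls in one of the $7$ types on the right, and checking that every hypothesis of \prettyref{theorem:gensurg}, especially the counting bound $x \ge 3w$ and the non-$\omega$ condition, is preserved under the substitution $V'_R = \{w\}$ at level $R-1$.
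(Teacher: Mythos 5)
Your proposal is correct and follows essentially the same route as the paper: choose $L$ maximal below $R$ with $|V_L| \le 2$, invoke \prettyref{corollary:leftok} for $V_L$, dispatch the case where $V_R$ is one of the $7$ types via \prettyref{theorem:gensurg} or \prettyref{corollary:othercase}, and in the residual case identify the unique articulation vertex in $V_{R-1}$ adjacent to $V_R$ and apply \prettyref{theorem:gensurg} at level $R-1$ with a singleton $V'_{R'}$ of modified type $\alpha$ or $\beta$. Your explicit accounting of $w$ and $x$ in the two hypothesis subcases, and your observation that the enlarged deletion set gives $x' \ge 3w' + 2$ (ruling out $(1,3)$ and $(2,6)$), match the paper's bookkeeping.
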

\begin{proof}
Let $L<R$ be maximal with $|V_L| \le 2$. Using \prettyref{corollary:leftok} (along with \prettyref{corollary:othercase} or \prettyref{theorem:gensurg}) we may assume $V_R$ falls outside of the 7 types; using \prettyref{claim:level2con} and \prettyref{claim:maxy} this means that either $|V_R|=1$ and it has one neighbour in $V_{R-1}$ but $\ge 3$ neighbours in $V_{R+1}$, or $|V_R|=2$ and these vertices each have one neighbour (the same one) in $V_{R-1}$ and one vertex of $V_R$ has $\ge 3$ neighbours in $V_{R+1}$.

In either case, only one vertex in $V_{R-1}$, call it $v$, is adjacent to $V_R$. Since $v$ is an articulation vertex we can do surgery on $V_L$ and $v$ --- we apply \prettyref{theorem:gensurg} to levels $L$ and $R' = R-1$, on sets $V_L$ and $V'_{R'} = \{v\}$ (here $V'_{R'}$ is of type $\alpha$ if $|V_R|=1$ or $\beta$ if $|V_R|=2$). The set $X$ is $V_{[L+1, R-1]} \bs \{v\}$, and $w = R'-L-1$ so $x=|X| \ge 3w+2, w \ge 0$. This indeed satisfies the conditions of \prettyref{theorem:gensurg} so we are done.
\end{proof}

\begin{proposition}
The size of the last level $V_D$ is 1.
\end{proposition}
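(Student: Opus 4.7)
The plan is to assume for contradiction that $|V_D| \ge 2$, fix $u \in V_D \setminus \{t\}$ (which has $d(u) \ge 3$ by \prettyref{proposition:no2}), and split on the largest index $L < D$ with $|V_L| \le 2$; by maximality, every intermediate level $V_{L+1}, \ldots, V_{D-1}$ has size $\ge 3$. The first case is $L=0$, where $n \ge 1 + 3(D-1) + |V_D| \ge 3D \ge 3(D-1)+2$, so \prettyref{proposition:simplecase} immediately gives $r(G) \ge \tfrac{2}{5}D$, contradicting $\fit(G) > 0$.

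Next, for $L \ge 1$ with $|V_{L+1}| \ge 3$ (i.e., either $L < D-1$, or $L = D-1$ with $|V_D| \ge 3$), I use the surgery machinery. \prettyref{corollary:leftok} places $V_L$ in one of the $7$ types; I then apply \prettyref{theorem:gensurg} with $V'_R = \{t\}$ of type $\omega$ (since $R = D$). The deleted set $X = V_{[L+1,D]} \setminus \{t\}$ has size $x \ge 3w + (|V_D|-1) \ge 3w+1$ with $w = D-L-1$, so the conditions $x \ge 2$, $x \ge 3w$ hold, and the excluded pairs $(w,x) \in \{(1,3),(2,6)\}$ are automatically avoided (each would require $x = 3w$, whereas $x \ge 3w+1$).

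The only remaining case is $L = D-1$ with $|V_D|=2$, which I handle directly. First, $|V_{D-1}|=1$ would force $d(u) \le |V_{D-1}| + |V_D| - 1 = 2$, contradicting $d(u)\ge 3$; so $V_{D-1} = \{a,b\}$ with $ab \in E$ (by \prettyref{claim:level2con}), and $V_D = \{u,t\}$. Since $N(u) \subseteq \{a,b,t\}$ and $d(u)\ge 3$, we must have $N(u) = \{a,b,t\}$ with $d(u)=3$ and $ut \in E$. Since each of $a, b$ needs at least one neighbor in $V_{D-2}$, I split on $d(t) \in \{2,3\}$: if $d(t)=3$, then $\{a,b,u,t\}$ induces a $K_4$ and $d(a),d(b)\ge 4$, so deleting $u$ yields $r(G')-r(G) = -\tfrac{1}{3} + \tfrac{1}{d(a)(d(a)-1)} + \tfrac{1}{d(b)(d(b)-1)} + \tfrac{1}{6} \le -\tfrac{1}{6}+\tfrac{1}{12}+\tfrac{1}{12} = 0$ while $D(G')\ge D$, so \prettyref{claim:useful} with $\Delta D = \Delta r = 0$ forces a contradiction; if $d(t)=2$, wlog $ta,tu\in E$ and $tb\notin E$, so $d(a)\ge 4$, and contracting $ut$ produces a merged vertex of degree $2$ still at distance $D$ from $s$, yielding $r(G')-r(G) = \tfrac{1}{2} - \tfrac{1}{3} - \tfrac{1}{2} + \tfrac{1}{d(a)(d(a)-1)} \le -\tfrac{1}{3}+\tfrac{1}{12} = -\tfrac{1}{4}$, again contradicting minimality.

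The main obstacle is exactly this last sub-case, where \prettyref{theorem:gensurg} fails because the deletable set would have size $|V_D|-1=1<2$; to get around this I step outside the surgery framework and exploit the rigid $K_4$ or $K_4$-minus-edge structure forced on $V_{D-1}\cup V_D$, together with the automatic degree bonus that vertices of $V_{D-1}$ inherit from their neighbors in $V_{D-2}$, to perform a local vertex deletion or edge contraction by hand.
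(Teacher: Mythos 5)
Your proof is correct, and while it follows the paper's overall skeleton (split on the rightmost $L<D$ with $|V_L|\le 2$, then invoke \prettyref{theorem:gensurg} with $V'_R=\{t\}$ for the generic case), it finishes the problematic $L=D-1$ case by a genuinely different route. The paper dispatches $L=D-1$ in one stroke via a planarity/face argument: since $|V_{D-1}|\le 2$, some face of the drawing contains both $v\in V_D\setminus\{t\}$ and a vertex of $V_{D-2}$, and adding that edge contradicts edge-maximality. You instead split on $|V_D|$: for $|V_D|\ge 3$ you observe the surgery machinery still applies with $w=0$, $x\ge 2$ (a clean observation the paper doesn't exploit), while for $|V_D|=2$ --- exactly where \prettyref{corollary:leftok} fails to classify $V_L$ and the surgery deletion set has size $1<2$ --- you pin down the induced structure on $V_{D-1}\cup V_D$ to $K_4$ or $K_4$ minus an edge, using \prettyref{proposition:no2} and \prettyref{claim:level2con}, and then perform a direct fitness-preserving reduction (delete $u$ in the $K_4$ case, contract $ut$ otherwise), feeding it into \prettyref{claim:useful}. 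Your degree arithmetic in both sub-cases checks out, including the $d(a)\ge 4$ bound and the parallel-edge suppression at $a$ after contraction. Your argument buys you independence from the paper's somewhat handwaved ``it is not hard to see some face contains $v$ and a vertex from $V_{D-2}$'' claim, at the cost of a longer and more case-heavy finish. You also explicitly handle $L=0$ via \prettyref{proposition:simplecase} --- worth noting, since there $V_L$ and $V'_R=\{t\}$ would both be of type $\omega$, which is outside the scope of \prettyref{theorem:gensurg}; the paper's write-up glosses over this point.
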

\begin{proof}
Suppose $|V_D| > 1$ for the sake of contradiction. Let $V_L$ be the rightmost level of size at most 2, which we know is one of the 7 types by \prettyref{corollary:leftok}. Let $v \in V_D \bs \{t\}$.
 If $L = D-1$ then it is not hard to see some face contains $v$ and a vertex from
 $V_{D-2}$; adding an edge between this pair does not decrease the diameter,
 so contradicts edge-maximality. Otherwise ($L<D-1$) apply surgery to $V_L$ and $t$: we cut out 1 or more levels of size at least 3, plus the vertices of $V_D \bs \{t\}$. Thus $x \ge 3w+1, w \ge 1$ and \prettyref{theorem:gensurg} is satisfied.
\end{proof}
Combining the results just proven, we have the desired structure theorem: $G$ is a graph where the first and last level have size 1, all levels have size at most 3, every level of size 2 is connected, and no two levels of size 3 are adjacent.

\subsection{Computation}\label{sec:computation}
We finish by showing that our hypothetical $G$ has $r \ge \frac{2}{5}D$.
\begin{theorem}\label{theorem:awesome}
Let $G$ be a graph where the first and last level have size 1, all levels have size at most 3, every level of size 2 is connected, and no two levels of size 3 are adjacent. Then $r(G) \ge \frac{2}{5}D + \frac{37}{60}$.
\end{theorem}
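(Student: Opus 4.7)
The plan is to lower-bound $r(G)$ level by level using the arithmetic-harmonic mean inequality (\prettyref{proposition:amhm}). Setting $\ell_i := |V_i|$, $f_i := |E(G[V_i])|$, and $e_i := |\{uv \in E : u \in V_i, v \in V_{i+1}\}|$, we have $\sum_{v \in V_i} d(v) = 2f_i + e_{i-1} + e_i$, so $\sum_{v \in V_i} 1/d(v) \ge \ell_i^2/(2f_i + e_{i-1} + e_i)$. Combining the trivial bounds $f_i \le \binom{\ell_i}{2}$ and $e_i \le \ell_i \ell_{i+1}$ with \prettyref{claim:a23} (which sharpens the latter to $e_i \le 5$ when $\{\ell_i, \ell_{i+1}\} = \{2,3\}$) produces a per-level lower bound $c^{\ast}(\ell_{i-1}, \ell_i, \ell_{i+1})$ depending only on three consecutive level sizes.

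Since $\ell_i \in \{1,2,3\}$ with no two adjacent $3$'s and $\ell_0 = \ell_D = 1$, there are only a handful of allowed triples to tabulate. Writing $\hat c := c^{\ast} - 2/5$, the target $r \ge \tfrac{2}{5}D + \tfrac{37}{60}$ across the $D+1$ levels is equivalent to $\sum_i \hat c_i \ge 13/60$. The tabulation shows that $\hat c_i < 0$ occurs only at size-$2$ triples adjacent to a $3$ (worst $\hat c(3,2,3) = -1/15$) and at size-$1$ articulation-point triples with heavy neighbours (worst $\hat c(3,1,3) = -7/30$), while every size-$3$ triple carries surplus ($\hat c(2,3,2) = 13/80$, $\hat c(1,3,2) = 17/70$, $\hat c(1,3,1) = 7/20$), and the boundary size-$1$ triples contribute $\hat c \in \{3/5, 1/10, -1/15\}$ depending on $\ell_1$ and $\ell_{D-1}$.

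To turn these observations into a uniform proof I would run a finite-state dynamic-programming argument: define $M(i, \ell', \ell) := \min \sum_{j \le i} c^{\ast}_j$ over valid prefixes ending at $(\ell_{i-1}, \ell_i) = (\ell', \ell)$, and verify by induction on $i$ that $M(i, \ell', \ell) \ge \tfrac{2}{5}i + \phi(\ell', \ell)$ for a potential $\phi$ on the nine states $\{1,2,3\}^2$ chosen so that $\phi(\ell', 1) \ge \tfrac{37}{60}$ for every $\ell'$, which extracts the additive constant when the forced endpoint $\ell_D = 1$ is plugged in. Finding $\phi$ and verifying the inductive step $\phi(\ell',\ell) - \phi(\ell,\ell'') \le \hat c(\ell',\ell,\ell'')$ for every allowed transition reduces to a small linear-programming check over $9$ states and at most $27$ transitions, easily automated by extending the Sage worksheet of \cite{calculations} used for earlier claims.

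The main obstacle I anticipate is that tracking only $\ell_i$ is insufficient: the $-7/30$ deficit of $(3,1,3)$ demands both its neighbouring $(?,3,1)$ and $(1,3,?)$ surpluses to cover it, so the potential must retain the preceding level size to route surplus correctly. Tightness of the constant $\tfrac{37}{60}$ is witnessed by the graph with $D = 2$ and level sizes $(1,3,1)$ (two apex vertices joined to a central $K_3$), for which direct computation gives $r = 17/12 = \tfrac{2}{5} \cdot 2 + \tfrac{37}{60}$; this configuration pins down the potential values near the endpoints.
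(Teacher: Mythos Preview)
Your approach is essentially the same as the paper's: both compute a per-level lower bound on $\sum_{v\in V_i}1/d(v)$ that depends only on $(|V_{i-1}|,|V_i|,|V_{i+1}|)$ (using \prettyref{claim:a23} to cap the $2$--$3$ interface at $5$ edges), and then minimize $\mathcal{R}-\tfrac{2}{5}D$ over admissible level-size sequences by a finite-state computation. Your potential-function DP on the $9$ states $(\ell',\ell)$ is exactly the LP dual of the paper's Bellman--Ford shortest-path formulation on the same state graph, so the two arguments are equivalent. The one technical difference is that the paper sharpens the per-level bound by using integrality of degrees (distributing $\mathcal{E}$ endpoints as evenly as possible over $|V_i|$ integers before inverting), whereas you use the raw AM--HM bound $|V_i|^2/\mathcal{E}$; this makes your $c^\ast$ slightly weaker than the paper's $\mathcal{C}$ on a few triples (e.g.\ $(2,3,2)$), but the extremal sequence $(1,3,1)=K_5^-$ sees no integrality gain, so the minimum still comes out to $\tfrac{37}{60}$. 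One small inaccuracy in your tabulation: $\hat c<0$ also occurs at triples like $(2,2,3)$ and $(1,1,3)$, not only at the two families you list, but this is harmless since the computer check handles all transitions anyway.
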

\begin{proof}
The most important fact about the structure is that, given the sizes of levels $i-1, i, i+1$, we can determine (or upper bound, depending on how you look at it) the degrees of the nodes in level $i$, which we use to get a lower bound on the sum of the inverse degrees for that level.

Given any two adjacent levels, we may upper bound the edges they share by a biclique. Furthermore, if a level of size 2 and a level of size 3 are adjacent, by \prettyref{claim:a23} we can upper bound their shared edges as being one edge short of a biclique. Hence let $\mathcal{S}(i, j) = i \cdot j$ unless $\{i, j\}=\{2, 3\}$ in which case $\mathcal{S}(i, j)=5$. Thus:
\begin{itemize}
\item $\sum_{v \in V_0} 1/d(v) \ge 1/|V_1|$
\item $\sum_{v \in V_D} 1/d(v) \ge 1/|V_{D-1}|$
\item For $0<i<D$ there are at most $\mathcal{E} := \mathcal{S}(|V_{i-1}|, |V_i|)+2\tbinom{|V_i|}{2}+\mathcal{S}(|V_i|, |V_{i+1}|)$ endpoints incident on $V_i$; considering the degrees are integral and using convexity we see $$\sum_{v \in V_i} 1/d(v) \ge \frac{\mathcal{E} \bmod |V_i|}{\lceil \mathcal{E}/|V_i| \rceil} + \frac{|V_i| - (\mathcal{E} \bmod |V_i|)}{\lfloor \mathcal{E}/|V_i| \rfloor} =: \mathcal{C}.$$
\end{itemize}
Since $\mathcal{C}$ is determined only by $|V_{i-1}|, |V_i|, |V_{i+1}|$, we write it as $\mathcal{C}(|V_{i-1}|, |V_i|, |V_{i+1}|)$. We therefore deduce for any sequence $(n_0, n_1, \dotsc, n_D)$ of level sizes of a graph $G$ that $$r(G) \ge \mathcal{R}(n_0, n_1, \dotsc, n_D) := 1/n_1 + 1/n_{D-1} + \sum_{i=1}^{D-1} \mathcal{C}(|V_{i-1}|, |V_i|, |V_{i+1}|).$$

Finally, we want to determine which valid sequence minimizes $\mathcal{R}(n_0, n_1, \dotsc, n_D) - \frac{2}{5} D$. Because $\mathcal{C}$ is a sum of local contributions, and because each level contributes 1 to the diameter, we can think of this last step as shortest path problem, as follows. Define a new digraph with vertex set $$\{s, (1, 1), (1, 2), (1, 3), (2, 1), (2, 2), (2, 3), (3, 1), (3, 2), t\},$$
where the $(i, j)$-vertices represent a pair of adjacent levels, $s$ represents the start, and $t$ the end. The intuition: we insert an arc from $(i, j)$ to $(k, \ell)$ whenever $j = k$, representing three consecutive levels. The cost of such an edge should account for the $r$-contribution of the level corresponding to $j$, minus the contribution from lengthening the diameter.

Formally, we add an arc $(i, j) \rightarrow (j, k)$ for all $i, j, k$ (with no consecutive 3s) having cost $\mathcal{C}(i, j, k) - \frac{2}{5}$; we add an arc $s \rightarrow (1, i)$ for all $i$ having cost $1/i$; and we add an arc $(i, 1) \to t$ for all $i$ having cost $1/i - \frac{2}{5}.$ Then it's easy to see that for any sequence of $n_i$'s, $\mathcal{R} - \frac{2}{5} D$ is given by the cost of the $(D+1)$-edge path $s \to (n_0, n_1) \to (n_1, n_2) \to \dotsb (n_{D-1}, n_D) \to t$ in the new digraph. Executing a shortest-path algorithm such as Bellman-Ford (see the worksheet at \cite{calculations}) establishes that the shortest path from $s$ to $t$ has cost $\frac{37}{60}$, hence $r \ge \mathcal{R} \ge \frac{2}{5} D + \frac{37}{60}$ for these graphs (and that there are no negative dicycles).
\end{proof}
In fact $r \ge \frac{2}{5} D + \frac{37}{60}$ holds for all graphs, is best possible, and the unique graph with $r = \frac{2}{5}D + \frac{37}{60}$ is $K_5^-$. To establish this precise result, small adjustments to our proofs are necessary, as well as exhaustive searching on all planar graphs with up to 9 vertices.

\section{Conclusion}
The main techniques underlying our diameter bounds for planar graphs were the surgery operation (which preserves planarity), and the fact that every planar graph has at most a linear number of edges. One might try the same approach on the family of graphs excluding any fixed $k$-clique minor, since such graphs have $O(nk \sqrt{\log k})$ edges (e.g., see~\cite{Thomason}).
A perpendicular avenue for future research would be to find a tight relation in connected planar graphs between the mean distance and the diameter.



\end{document}